\documentclass{amsart}
\usepackage{amssymb,amsmath,amsthm,amsfonts}
\usepackage{graphicx}
\usepackage{esint}
\usepackage{tikz}
\usetikzlibrary{arrows.meta,positioning}
\usepackage{environ}
\usepackage{float}
\usepackage{xcolor}    

\usepackage{enumitem}

\DeclareMathOperator{\diam}{diam}
\DeclareMathOperator{\dist}{dist}

\newcommand{\field}[1]{\mathbb{#1}}
\newcommand{\N}{\field{N}}   
\newcommand{\Z}{\field{Z}} 
\newcommand{\R}{\field{R}}

\NewEnviron{bigequation}{
    \begin{equation}
    \scalebox{1.5}{$\BODY$}
    \end{equation}
    }

\newcommand{\eps}{\varepsilon}

\newcommand{\loc}{{\text{loc}}}

\def\Barint_#1{\mathchoice
          {\mathop{\vrule width 6pt height 3 pt depth -2.5pt
                  \kern -8pt \intop}\nolimits_{#1}}%
          {\mathop{\vrule width 5pt height 3 pt depth -2.6pt
                  \kern -6pt \intop}\nolimits_{#1}}%
          {\mathop{\vrule width 5pt height 3 pt depth -2.6pt
                  \kern -6pt \intop}\nolimits_{#1}}%
          {\mathop{\vrule width 5pt height 3 pt depth -2.6pt
                  \kern -6pt \intop}\nolimits_{#1}}}

\makeatletter
\newcommand*{\mint}[1]{%
  \mint@l{#1}{}%
}
\newcommand*{\mint@l}[2]{%
  \@ifnextchar\limits{%
    \mint@l{#1}%
  }{%
    \@ifnextchar\nolimits{%
      \mint@l{#1}%
    }{%
      \@ifnextchar\displaylimits{%
        \mint@l{#1}%
      }{%
        \mint@s{#2}{#1}%
      }%
    }%
  }%
}
\newcommand*{\mint@s}[2]{%
  \@ifnextchar_{%
    \mint@sub{#1}{#2}%
  }{%
    \@ifnextchar^{%
      \mint@sup{#1}{#2}%
    }{%
      \mint@{#1}{#2}{}{}%
    }%
  }%
}
\def\mint@sub#1#2_#3{%
  \@ifnextchar^{%
    \mint@sub@sup{#1}{#2}{#3}%
  }{%
    \mint@{#1}{#2}{#3}{}%
  }%
}
\def\mint@sup#1#2^#3{%
  \@ifnextchar_{%
    \mint@sup@sub{#1}{#2}{#3}%
  }{%
    \mint@{#1}{#2}{}{#3}%
  }%
}
\def\mint@sub@sup#1#2#3^#4{%
  \mint@{#1}{#2}{#3}{#4}%
}
\def\mint@sup@sub#1#2#3_#4{%
  \mint@{#1}{#2}{#4}{#3}%
}
\newcommand*{\mint@}[4]{
  \mathop{}%
  \mkern-\thinmuskip
  \mathchoice{%
    \mint@@{#1}{#2}{#3}{#4}%
        \displaystyle\textstyle\scriptstyle
  }{%
    \mint@@{#1}{#2}{#3}{#4}%
        \textstyle\scriptstyle\scriptstyle
  }{%
    \mint@@{#1}{#2}{#3}{#4}%
        \scriptstyle\scriptscriptstyle\scriptscriptstyle
  }{%
    \mint@@{#1}{#2}{#3}{#4}%
        \scriptscriptstyle\scriptscriptstyle\scriptscriptstyle
  }%
  \mkern-\thinmuskip
  \int#1%
  \ifx\\#3\\\else_{#3}\fi
  \ifx\\#4\\\else^{#4}\fi  
}
\newcommand*{\mint@@}[7]{%
  \begingroup
    \sbox0{$#5\int\m@th$}%
    \sbox2{$#5\int_{}\m@th$}%
    \dimen2=\wd0 %
    \let\mint@limits=#1\relax
    \ifx\mint@limits\relax
      \sbox4{$#5\int_{\kern1sp}^{\kern1sp}\m@th$}%
      \ifdim\wd4>\wd2 %
        \let\mint@limits=\nolimits
      \else
        \let\mint@limits=\limits
      \fi
    \fi
    \ifx\mint@limits\displaylimits
      \ifx#5\displaystyle
        \let\mint@limits=\limits
      \fi
    \fi
    \ifx\mint@limits\limits
      \sbox0{$#7#3\m@th$}%
      \sbox2{$#7#4\m@th$}%
      \ifdim\wd0>\dimen2 %
        \dimen2=\wd0 %
      \fi
      \ifdim\wd2>\dimen2 %
        \dimen2=\wd2 %
      \fi
    \fi
    \rlap{%
      $#5%
        \vcenter{%
          \hbox to\dimen2{%
            \hss
            $#6{#2}\m@th$%
            \hss
          }%
        }%
      $%
    }%
  \endgroup
}

\theoremstyle{definition}
\newtheorem{theorem}{Theorem}

\newtheorem{corollary}[theorem]{Corollary}
\newtheorem{lemma}[theorem]{Lemma}

\theoremstyle{definition}
\newtheorem{definition}[theorem]{Definition}

\newtheorem{remark}[theorem]{Remark}

\numberwithin{theorem}{section} \numberwithin{equation}{section}

\title{A purely metric  characterization of supercritical Sobolev spaces}
\date{\today}
\begin{document}
\author[Ryan Alvarado]{Ryan Alvarado}
\address{Department of Mathematics \\ Amherst College, Amherst, MA}
\email{rjalvarado@amherst.edu}
\author[Efstathios-K. Chrontsios-Garitsis]{Efstathios-K. Chrontsios-Garitsis}
\address{Department of Mathematics \\  The Ohio State University\\ 231 W 18th Ave\\ Columbus, OH 43210}
\email{chrontsios.1@osu.edu, echronts@gmail.com}
\subjclass[2020]{Primary 46E36; Secondary 46E35, 30L99}

\maketitle

\begin{center}
\it Dedicated to Bob Kaufman, 1942–2025.
\end{center}

\begin{abstract}
In this paper we provide a purely metric and derivative-free characterization of continuous mappings in the local supercritical Sobolev space. More specifically, we show that a continuous mapping $f\colon\R^n\to\R^m$ lies in the local Sobolev space $W^{1,p}_\loc(\R^n\colon\R^m)$ if and only if $f$ is a $(p,1-n/p)$-compactly H\"older mapping, where $p>n$. In fact, we prove the following more general result: whenever $X$ is a complete $Q$-Ahlfors regular metric space supporting a $p$-Poincar\'e inequality with $p>Q$, and $V$ is any Banach space,  the compactly H\"older class $CH^{p,1-Q/p}(X\colon V)$ consists precisely of the continuous representatives of the local Haj\l asz and Newtonian Sobolev spaces $M^{1,p}_\loc (X \colon V)$ and $N^{1,p}_\loc(X\colon V)$. We also establish quantitative comparisons between the corresponding seminorms.
\end{abstract}

\section{Introduction}

Sobolev spaces are famously considered to be fundamental tools in modern analysis.  They provide the right framework to formulate weak notions of differentiability that are
stable under limiting procedures. The theory of Sobolev spaces has found successful applications within elliptic and parabolic partial differential equations (PDEs), nonlinear potential
theory, and the calculus of variations, to name a few (see for instance
\cite{Fract_Sob_book,EvansPDE,CalcVarGiaq,Heinonen2ndEdWeightedSobBook}).
The increasing interest and  need to extend this theory to metric spaces soon emerged, and has been a very active line of research for the past two and a half decades. Applications of this endeavor include the development of the theory of  PDEs \cite{KigamiAnFr}, calculus of variations \cite{AmbrosioBV} and optimal transportation \cite{AmbrosioGradFlow} on the nonsmooth setting of rough spaces. The theory of Sobolev-type mappings defined between metric spaces has been developed by many authors (for instance \cite{CheegerSob}, \cite{HajSob}, \cite{HajKoskSob}, \cite{KorSchSob}, \cite{NagesNewtonianSob}), who have used different ideas to adjust the theory to different settings. We refer to \cite{Juha-Jeremy-etc-book} for a detailed exposition.

Among the principal metric Sobolev notions
are the  Haj\l asz  and Newtonian
spaces, introduced by Haj\l asz in \cite{HajSob} and Shanmugalingam \cite{NagesNewtonianSob}, respectively.   These notions have proved effective precisely because they
replace classical derivatives by objects adapted to the geometry of the
underlying space.  They nevertheless retain measure-theoretic ties, through
integrability and almost-everywhere inequalities.
It is therefore natural to ask how much Sobolev regularity can be detected
using only the metric information of the underlying metric space $(X,d_X)$ and the target space $(Y,d_Y)$.  A Sobolev-type definition formulated solely in terms
of distances and balls has at least one advantage;  it is a well-defined notion before a measure is
chosen for the space $X$. This provides more insight into what a natural Sobolev structure might be for $X$, already in the Sobolev functions case.  The point is not to eliminate the measure from Sobolev
theory itself, but to isolate metric information which, once a suitable measure
is present,  is strong enough to recover the intrinsic  Sobolev
regularity of $X$.

The relevant Sobolev classes in this paper are two variations of the class of compactly H\"older
mappings, as introduced in \cite{Chron_comp_holder_Minkowski} and then modified in \cite{AC-G26}.
The main motivation behind the introduction of this notion was in connection with
the distortion of certain fractal dimensions. In particular, a question of broad interest has been to determine in what ways certain classes of mappings distort  dimension notions, a programme essentially initiated by Gehring-V\"ais\"al\"a \cite{GehringVais}. Kaufman in \cite{Kaufman} was the first one to introduce an appropriate 
stopping-time argument in this context, which later inspired an avalanche of results in this programme by many different authors that is very active (see for instance \cite{FraTyson_intermed_dim, SobSubcriticalDimDist1,SobSubcriticalDimDist2,OurQCspec,SobSubcritDimDist0,HolomSpecChron,btw:heisenberg,BaloghAGMS, BishHakWill:QSdistortion}).

Roughly speaking, a mapping  $f:X\to Y$  between two metric spaces belongs to the $(p,\alpha)$-compactly H\"older class
$CH^{p,\alpha}(X \colon Y)$ if, on every compact subset of $X$, the $p$-sum of its
local $\alpha$-H\"older coefficients is uniformly bounded over all sufficiently
fine ball covers $\{B(x_i,r)\}_{i\in I}$ that satisfy a given packing condition.
Thus, this notion combines a pointwise modulus of continuity
with an $\ell^p$ summability condition across location and scale.  Moreover, the definition involves the metrics of $X$ and $Y$, but no measure. The modified compactly H\"older class $CH^{p,\alpha}_m(X\colon Y)$ is defined similarly but for covers $\{B(x_i,r_i)\}_{i\in I}$ of potentially different radii $r_i$. 
Precise definitions are given in Section~2.

In the interest of comparing the $CH^{p,\alpha}(X \colon Y)$ and $CH^{p,\alpha}_m(X \colon Y)$ notions to other Sobolev notions, and specifically the Newtonian Sobolev maps, we focus on a complete $Q$-Ahlfors regular metric measure space
$(X,d,\mu)$ supporting a $p$-Poincar\'e inequality for $p>Q$ and we identify $Y$ with an arbitrary Banach space $V$. This is the natural setting for the Newtonian Sobolev class (see \cite{QCanalyticDefHKSTyson}). Note that the latter assumption on $Y$ is not an essential restriction due to the Kuratowski isometric embedding (see for instance \cite{Juha-Jeremy-etc-book}).  For $t>Q$, write
$$
    \alpha_t:=1-\frac{Q}{t}.
$$
This is the H\"older exponent for super-critical Sobolev maps resulting from the Morrey embedding (see for instance \cite{EvansPDE}).  Our first
main result shows that membership in the compactly-H\"older class at this exponent relation
recovers the corresponding Haj\l asz--Sobolev space and, consequently, the Newtonian Sobolev space, while in the modified compactly-H\"older case there is an arbitrarily small loss in the integrability exponent. Note that in the case of the usual Euclidean setting $X=\R^n$, $Y=\R^m$ the Haj\l asz and Newtonian notions both coincide with the classical Sobolev spaces $W^{1,p}_\loc(\R^n, \R^m)$, $p>n$. Therefore, the compactly-H\"older maps essentially provide a novel, purely metric  classification of super-critical Sobolev spaces in the Euclidean setting.

\begin{theorem}\label{thmA:almost-characterization}
Let $(X,d,\mu)$, $V$, $p$ be as above.  Then for every $q\in (Q,p)$, we have 
$$
\begin{aligned}
CH^{p,\alpha_p}_m(X \colon V)
\subset CH^{p,\alpha_p}(X \colon V)
= M^{1,p}_{\mathrm{loc}}(X \colon V)= N^{1,p}_{\mathrm{loc}}(X \colon V)
\subset 
CH^{q,\alpha_q}_m(X \colon V),
\end{aligned}
$$
    with the $M^{1,p}_{\text{loc}}(X \colon V)$, $N_\loc^{1,p}(X \colon V)$ spaces understood as the corresponding classes of continuous representatives.
\end{theorem}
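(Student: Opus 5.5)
The chain splits into four inclusions, which I will prove separately.

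\emph{First inclusion, $CH^{p,\alpha_p}_m\subset CH^{p,\alpha_p}$.} This is immediate from the definitions. The admissible covers with a common radius $r$ are a special case of the covers with variable radii $r_i$. So for each compact set, the supremum defining the $CH$ seminorm is at most the one defining the $CH_m$ seminorm.

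\emph{Equality $M^{1,p}_{\loc}=N^{1,p}_{\loc}$ for continuous representatives.} This is the known result for complete $Q$-Ahlfors regular (hence doubling) spaces supporting a $p$-Poincar\'e inequality. It goes through Keith--Zhong self-improvement to a $(p-\eps)$-Poincar\'e inequality. A Haj\l asz gradient of the form $g=C(M g_u^{s})^{1/s}$ with $s<p$ then works, where $g_u$ is an upper gradient. The Banach-valued case is handled as in Heinonen--Koskela--Shanmugalingam--Tyson, so I will simply quote it.

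\emph{Inclusion $CH^{p,\alpha_p}\subset M^{1,p}_{\loc}$.} Fix a compact set $K$ and a small radius $r$. Take a maximal $r$-separated net $\{x_i\}$ in a neighbourhood of $K$; the balls $B(x_i,r)$ satisfy the packing condition. Define a discrete gradient
\[
g_r=\sum_i \frac{\operatorname{osc}_{B(x_i,3r)}f}{r}\,\chi_{B(x_i,r)} .
\]
By Ahlfors regularity, $\mu(B(x_i,r))\approx r^Q$. Hence
\[
\int g_r^p\,d\mu\lesssim \sum_i \Big(\frac{\operatorname{osc} f}{r^{\alpha_p}}\Big)^p,
\]
since $r^{-p}\cdot r^{Q}=r^{-p\alpha_p}$. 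The right-hand side is bounded uniformly in $r$ by the $CH^{p,\alpha_p}$ seminorm. Setting $r=2^{-k}$ and $g=\sup_k g_{2^{-k}}$ would give $|f(x)-f(y)|\le d(x,y)\,(g(x)+g(y))$ for $d(x,y)\approx 2^{-k}$. However, the supremum destroys $L^p$ bounds. I will therefore instead take a weak limit of $g_{2^{-k}}$ in $L^p$. I will show that $f$ is a limit of discrete convolutions $f_k$ that are Lipschitz with $\operatorname{lip} f_k\lesssim g_{2^{-k}}$, hence Newtonian with uniformly bounded energy. Lower semicontinuity of Newtonian energy under uniform convergence with weakly convergent upper gradients (Mazur's lemma plus Fuglede) then puts $f$ in $N^{1,p}_{\loc}$. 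This route bypasses Haj\l asz directly and is where I expect the technical work to lie.

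\emph{Inclusion $M^{1,p}_{\loc}\subset CH^{q,\alpha_q}_m$.} Let $f\in M^{1,p}$ with Haj\l asz gradient $g$, and fix $q<p$. The Morrey-type estimate from the Haj\l asz inequality gives
\[
\operatorname{osc}_{B(x,r)}f\lesssim r\Big(\mint{-}_{B(x,2r)}g^{s}\Big)^{1/s}\quad\text{for } s\in(Q,q),
\]
or equivalently the bound $r^{1-Q/q}\big(\int_{B(x,2r)}g^q\big)^{1/q}$. Then
\[
\Big(\frac{\operatorname{osc}f}{r_i^{\alpha_q}}\Big)^q\lesssim \int_{B(x_i,2r_i)}g^q .
\]
Summing over a cover with bounded overlap of the dilated balls (the packing condition, via doubling) yields a bound by $\int g^q\lesssim \|g\|_{L^p}^q\,\mu(K_1)^{1-q/p}$. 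The strict loss $q<p$ is needed because the variable-radius packing only gives bounded overlap for the doubled balls after passing through a maximal-function or Hölder step. If the packing condition alone gave bounded overlap, then $q=p$ would work. Most likely it does not, since disjoint balls with small radii can nest near large ones. In that case I will instead bound $\operatorname{osc}$ by $r\,(Mg^s)^{1/s}$ evaluated at a point of the smaller ball, and use the $L^{q/s}$-boundedness of $M$ for $s<q$.

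The main obstacle is the inclusion $CH^{p,\alpha_p}\subset M^{1,p}_{\loc}$, specifically extracting a genuine $L^p$ gradient from single-scale bounds.
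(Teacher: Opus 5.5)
Your route to $CH^{p,\alpha_p}\subset M^{1,p}_{\loc}$ is a legitimate alternative to the paper's. You use discrete convolutions with $\operatorname{Lip} f_k\lesssim g_{2^{-k}}$ (after a harmless enlargement of the dilation $3r$), then Mazur plus Fuglede to get $f\in N^{1,p}_{\loc}$, and then the quoted equality $N^{1,p}_{\loc}=M^{1,p}_{\loc}$. This needs only Ahlfors regularity up to the last step. The paper instead builds Haj\l asz gradients $C\,M_s(\chi_K\operatorname{Lip}\Phi_t f)$ directly, via Lemma~\ref{lem: Haj grad for Lip - OLD}, Keith--Zhong and the maximal theorem, and that is what later yields the quantitative $\dot M^{1,p}$ bounds of Theorem~\ref{thmB:seminorm-comparison}.

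However, your plan never proves the reverse inclusion $M^{1,p}_{\loc}\subset CH^{p,\alpha_p}$. That is half of the equality $CH^{p,\alpha_p}=M^{1,p}_{\loc}$, and it does not follow from the $q<p$ statement. This is exactly where your remark ``with bounded overlap, $q=p$ would work'' applies. For a common radius $r$, every $i$ with $z\in 2B_i$ satisfies $B(x_i,\eps r)\subset B(z,(2+\eps)r)$, and these balls are pairwise disjoint. Ahlfors regularity then gives $\#\{i: z\in 2B_i\}\le C_\mu^2(2+\eps)^Q\eps^{-Q}$. Combining this with $|f|_{\alpha_p,B_i}^p\lesssim\int_{2B_i}g^p$ from Lemma~\ref{DOUBembedding-cpt} and summing gives the $CH^{p,\alpha_p}$ bound. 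This is the paper's argument, and you need to include it.

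The second gap is in $M^{1,p}_{\loc}\subset CH^{q,\alpha_q}_m$. Throughout, you control $\operatorname{osc}_{B_i}f/r_i^{\alpha_q}$, but Definition~\ref{def: CH maps} uses the H\"older coefficient $|f|_{\alpha_q,B_i}$. That coefficient dominates $\operatorname{osc}_{B_i}f/(2r_i)^{\alpha_q}$, but not conversely. Your fallback (bound $\operatorname{osc}_{B_i}f\lesssim r_iM_sg(z)$ for $z\in\eps B_i$, then use $L^{q/s}$-boundedness of $M$) cannot be upgraded to H\"older coefficients. It never uses $q<p$: take $s\in(Q,p)$ and $q=p$. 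So if it bounded H\"older coefficients, it would prove $M^{1,p}_{\loc}\subset CH^{p,\alpha_p}_m$, contradicting Theorem~\ref{thm: W1p not CH}. In that example the oscillation ratios of the balls $B_{j,k}$ are summable. The divergence comes from the pair $q_j$, $q_j+s_jx_0$ at distance $s_j|x_0|\ll\rho_{j,k}$, i.e.\ from small-scale pairs inside large balls, which $M_sg$ at a point of $\eps B_i$ does not detect.

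The correct step is the pairwise Morrey estimate at exponent $q$ on the whole ball. It gives $|f|^q_{\alpha_q,B_i}\lesssim\int_{2B_i}g^q\lesssim r_i^Q\,M(g^q\chi_{K'})(z)$ for every $z\in\eps B_i$, since $2B_i\subset B(z,3r_i)$. Average over $\eps B_i$, sum over these disjoint balls, and apply H\"older on a compact set $K''$. This yields $\sum_i|f|^q_{\alpha_q,B_i}\lesssim\eps^{-Q}\mu(K'')^{1-q/p}\|M(g^q\chi_{K'})\|_{L^{p/q}}\lesssim\|g\|^q_{L^p(K')}$. What forces $q<p$ is the maximal theorem at exponent $p/q>1$ applied to $g^q$. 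The paper simply cites \cite{AC-G26} for this inclusion.
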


The inclusion $CH^{p,\alpha_p}_m(X \colon V) \subset CH^{p,\alpha_p}(X \colon V)$ is immediate by the definition of the two notions, and the fact that $M^{1,p}_{\text{loc}}(X \colon V)=N_\text{loc}^{1,p}(X \colon V)$ in this context where $X$ is Ahlfors regular and supports a Poincar\'e inequality is standard. 
The restriction $q<p$ in the right-hand inclusion of Theorem~\ref{thmA:almost-characterization} arises from the use of the Hardy--Littlewood maximal function theorem to handle the required packing condition and the potentially distinct radii $r_i$ in the definition of $CH_m^{p,\alpha_p}(X\colon V)$ (see \cite{AC-G26}). While $q$ may be
chosen arbitrarily close to $p$, we show that even
in the Euclidean setting the remaining upper endpoint $q=p$ cannot, in general, be included (see Theorem~\ref{thm: W1p not CH}).  In
particular, for every $n\geq 1$ and $p>n$ we construct a continuous function
$u\in W^{1,p}(\R^n)\subset CH^{p,\alpha_p}(\R^n \colon \R)$ such that
$$
    u\notin CH_m^{p,\alpha_p}(\R^n \colon \R).
$$
Consequently,  the loss
$q<p$ for the modified compactly H\"older class in
Theorem~\ref{thmA:almost-characterization} is sharp in this sense.

There is also a quantitative local form of the result.  For a compact set
$K\subset X$ and $0<\eps<1$, let
$[f]_{p,\alpha_p,\eps}(K)$ denote the compactly H\"older seminorm obtained by
taking the supremum of the $p$-sums of the $\alpha_p$-H\"older coefficients
over admissible sufficiently $\eps$-fine covers of $K$.  Let
$\|f\|_{\dot{M}^{1,p}(A)}$ denote the infimum of the $L^p(A)$-norms of Haj\l asz upper
gradients of $f$ on $A$.

\begin{theorem}\label{thmB:seminorm-comparison}
Let $X, V, p$ be as in Theorem~\ref{thmA:almost-characterization}, and $0<\eps<1/8$.  Suppose that $K\subset X$ is compact,
$\Omega\subset X$ is open with $K\subset\Omega$, and $U\subset X$ is a
nonempty, connected, measurable set satisfying
$\dist(U,X\setminus K)>0.$
Then there exists a positive constant $C$ such that every continuous mapping $f:X\to V$ satisfies
$$
    C^{-1}\|f\|_{\dot{M}^{1,p}(U)}
    \leq
    [f]_{p,\alpha_p,\eps}(K)
    \leq
    C\|f\|_{\dot{M}^{1,p}(\Omega)}.
$$
\end{theorem}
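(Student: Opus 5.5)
We prove the two inequalities separately. Throughout, $\mathrm{osc}_{B(x,r)}f$ denotes the oscillation of $f$ on the ball, and the $\alpha_p$-H\"older coefficient of a ball is $\mathrm{osc}_{B(x,r)}f/r^{\alpha_p}$. The upper bound comes from a Morrey-type pointwise estimate for Haj\l asz gradients. The lower bound is the hard direction: we build a Haj\l asz gradient of $f$ out of the compactly H\"older data.

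\emph{Upper bound.} Let $g\in L^p(\Omega)$ be a Haj\l asz upper gradient of $f$ on $\Omega$. For a ball $B=B(x,r)$ with $x\in K$ and $r<\eps$ small enough that $2B\subset\Omega$ (possible since $\dist(K,X\setminus\Omega)>0$), we use the standard Morrey estimate in $Q$-regular spaces. For $y,z\in B$ we have $|f(y)-f(z)|\le d(y,z)(g(y)+g(z))$ almost everywhere, and a telescoping argument over balls gives
$$\mathrm{osc}_B f\le C\, r\Big(\mint{-}_{2B} g^p\,d\mu\Big)^{1/p}=C\,r^{1-Q/p}\Big(\int_{2B}g^p\,d\mu\Big)^{1/p}.$$
The oscillation bound holds everywhere because $f$ is continuous. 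The coefficient $r^{1-Q/p}$ cancels exactly against $r^{\alpha_p}$, so summing the $p$-th powers over an admissible cover $\{B(x_i,r)\}$ yields $C\sum_i\int_{2B_i}g^p$. The packing condition in the definition of admissible covers, together with Ahlfors regularity, bounds the overlap of the doubled balls $\{2B_i\}$ by a constant $N$. Hence the sum is at most $CN\|g\|_{L^p(\Omega)}^p$. Taking the infimum over $g$ gives the right-hand inequality.

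\emph{Lower bound.} Let $S:=[f]_{p,\alpha_p,\eps}(K)$, assumed finite. Let $r_0>0$ be the threshold below which covers are admissible, and set $r_k=2^{-k}r_0$. For each $k$, fix a maximal $r_k$-separated net $\{x_i^k\}$ in $K$. The cover $\{B(x_i^k,r_k)\}$ satisfies the packing condition, so
$$\sum_i\big(\mathrm{osc}_{B(x_i^k,r_k)}f\big)^p\le S^p r_k^{p\alpha_p}=S^p r_k^{p-Q}.$$
Define
$$g:=\sup_k g_k,\qquad g_k:=\sum_i \frac{\mathrm{osc}_{B(x_i^k,2r_k)}f}{r_k}\,\chi_{B(x_i^k,r_k)}.$$
The enlarged balls $B(x_i^k,2r_k)$ are not literally members of an admissible cover. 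They are, however, covered by boundedly many balls of comparable radius belonging to admissible covers, so we expect their oscillations to obey the same $p$-sum bound up to a constant; this needs checking against the precise definition. Granting this, bounded overlap gives
$$\int g_k^p\,d\mu\le C\sum_i\frac{(\mathrm{osc}\,f)^p}{r_k^p}\,r_k^Q\le CS^p.$$

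This estimate is uniform in $k$, not summable, which is the main obstacle. A plain supremum over $k$ loses control, since $\int\sup_k g_k^p\le\sum_k\int g_k^p$ diverges. Three remedies are available:
\begin{itemize}
\item A \emph{single-scale} Haj\l asz gradient suffices locally. For points $y,z\in U$ with $d(y,z)\approx r_k$, the bound $|f(y)-f(z)|\le d(y,z)(g_k(y)+g_k(z))$ holds, so a function of the form $\sup_k g_k\,\chi_{\{\cdot\}}$ is only needed scale by scale.
\item Using connectedness of $U$ and $\dist(U,X\setminus K)>0$, one can restrict to pairs $y,z\in U$ at distance below $\dist(U,X\setminus K)$ and handle the remaining, large-distance pairs by chaining.
\item We can invoke the characterization $M^{1,p}=N^{1,p}$ together with the known fact that the Haj\l asz seminorm is comparable to $\liminf_k\|g_k\|_p$-type discrete seminorms. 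That is, use the Korevaar--Schoen/Ba\-ta\-eva type result: in a $Q$-regular $p$-Poincar\'e space, $\|f\|_{\dot N^{1,p}}\lesssim\liminf_{r\to0}\big(\int\!\!\mint{-}_{B(x,r)}|f-f(y)|^p/r^p\big)^{1/p}$.
\end{itemize}
I would take the third route. Bound that energy at scale $r_k$ by $\int g_k^p\le CS^p$ on $U$. This gives $f\in N^{1,p}(U)$ with an upper gradient of norm $\le CS$. Then convert to a Haj\l asz gradient via the maximal function $M(\rho^{q})^{1/q}$ with $q<p$ (using Keith--Zhong self-improvement), which yields $\|f\|_{\dot M^{1,p}(U)}\le CS$. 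Connectedness of $U$ is what lets the pointwise Haj\l asz inequality be propagated to all pairs in $U$. The main difficulty I anticipate is exactly this passage from uniform single-scale bounds to a genuine upper gradient, together with making the enlarged-ball step rigorous.
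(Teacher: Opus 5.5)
\textbf{Upper bound.} This is the paper's argument (Morrey estimate on $2B_i$, then bounded overlap of $\{2B_i\}$ from Ahlfors regularity and the $\eps$-packing), but two details need correcting.

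First, the paper's $|f|_{\alpha_p,B}$ is $\sup_{y\neq z\in B}\|f(y)-f(z)\|/d(y,z)^{\alpha_p}$, not $\mathrm{osc}_B f/r^{\alpha_p}$. A single-scale oscillation bound does not control this supremum. You need the pointwise Morrey inequality $\|f(y)-f(z)\|\le C\,d(y,z)^{\alpha_p}\|g\|_{L^p(2B)}$ for all $y,z\in B$ (Lemma~\ref{DOUBembedding-cpt}).

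Second, admissible balls need only meet $K$, so their centers need not lie in $K$. Apply that lemma on a compact neighborhood such as $K_1=\{\dist(\cdot,K)\le 1\}$, and take radii below $\dist(K,X\setminus\Omega)/3$.

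\textbf{Lower bound, first step.} The enlarged-ball step you flag is harmless. The $x_i^k$ are $r_k$-separated and $2\eps r_k<r_k/2$, so $\{B(x_i^k,2r_k)\}_i$ is itself admissible. Also $\mathrm{osc}_Bf\le(\diam B)^{\alpha_p}|f|_{\alpha_p,B}$ goes in the right direction. This is exactly how the paper uses $\eps<1/8$ for its cover $\{B(z_i,2At)\}$. Hence $\int_K g_k^p\lesssim[f]_{p,\alpha_p,\eps}(K)^p+\delta$ holds, and it dominates the Korevaar--Schoen energy at scale $r_k$ on any set inside $K$. Your first ``remedy'' does not produce a Haj\l asz gradient, which must be a single function serving all pairs. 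Your second remedy is in fact the missing ingredient described next.

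\textbf{The genuine gap} is the passage from that energy bound to $\|f\|_{\dot M^{1,p}(U)}$. Nothing is known about $f$ outside $K$. So a (necessarily local) Korevaar--Schoen theorem gives at best a $p$-weak upper gradient $\rho$ on an open set $W$ with $U\subset W$ and $\dist(W,X\setminus K)>0$. It does not give ``$f\in N^{1,p}(U)$'', since $U$ need not be open.

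The conversion $h=CM_q(\chi_W\rho)$ behind $N^{1,p}=M^{1,p}$ then uses the Poincar\'e inequality on balls $B(x,2\lambda d(x,y))$. These leave $W$ once $d(x,y)\gtrsim\dist(U,X\setminus K)/\lambda$. The hypotheses give no relation between $\diam U$ and $\dist(U,X\setminus K)$, so the conversion fails for far-apart pairs.

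Connectedness also cannot be used simply to ``propagate the pointwise inequality''. Chaining the Haj\l asz inequality through intermediate points produces values of $h$ at those points, not at $x$ and $y$. What is needed is the argument of Lemma~\ref{lem: Haj grad for Lip}:
\begin{enumerate}
\item[(i)] For $d(x,y)<\rho_0:=\min\{\diam U,\dist(U,X\setminus K)/(16\lambda)\}$, telescope inside $K$.
\item[(ii)] For $d(x,y)\ge\rho_0$, use connectedness to join $x,y$ by a chain of at most $N(U,\rho_0)$ overlapping balls $B(z_i,\rho_0)$ with $z_i\in U$, and chain their \emph{averages}.
\item[(iii)] Bound each link by Poincar\'e, then by the maximal function at $x$ via the ball $B(x,4\lambda\diam U)$.
\item[(iv)] Absorb the number of links, using $\rho_0\le d(x,y)$.
\end{enumerate}
Without this you only recover Theorem~\ref{thm: CH-Haj seminorms - OLD}, which assumes $\lambda\diam U<\dist(U,X\setminus K)$.

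\textbf{Comparison with the paper.} The paper needs no Korevaar--Schoen or Newtonian theory. Its $\Phi_tf$ (Lemma~\ref{le: partition of unity}) is precisely the discrete convolution hidden inside the Korevaar--Schoen proof. It applies the chaining lemma to the Lipschitz maps $u_j=\Phi_{t_j}f$, for which the Poincar\'e inequality holds as stated with $\operatorname{Lip}u$. It then passes to the limit by Mazur's lemma directly in the Haj\l asz class (Lemma~\ref{le: limit of Haj is Haj}), which works on an arbitrary measurable $U$. Your route, once completed, would additionally require the local Korevaar--Schoen theorem and the Poincar\'e inequality for Newtonian pairs on subdomains.
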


The two inequalities in Theorem~\ref{thmB:seminorm-comparison} display the role
of the compactly H\"older seminorm  in a clean way.  On the one hand, it
controls a Sobolev
seminorm on an interior set that might be ``arbitrarily close'' to the set $K$.  On the other hand, it is controlled by a slightly
stronger Sobolev seminorm on a surrounding open set that is also arbitrarily close.  Thus, it provides a purely metric description of local Sobolev regularity, with quantitative comparisons on nested sets. It should be noted that we also prove a version of the above result for arbitrary $U$. However, in that case the trade-off is that $U$ has to be contained uniformly deep in the interior of $K$ (see Theorem~\ref{thm: CH-Haj seminorms - OLD}).

It should be noted that in variational and PDE
settings, the  inequality in Theorem~\ref{thmB:seminorm-comparison} gives
a derivative-free way to certify Sobolev regularity of a candidate map or
limit on interior regions, a precious trait for metric-valued variational problems
\cite{CalcVarGiaq,KorSchSob}.  In addition,
Sobolev maps  have been recently used in machine learning and AI both to match
derivatives of target functions and to regularize adversarial critics
\cite{AlSobolevTraining, AlSobolevGAN}. Since the compactly
H\"older functional is built purely from distances, it
suggests a derivative-free regularity alternative for nonsmooth 
data that could potentially decrease computation costs.

We briefly outline the arguments behind the two main results.  Starting with a
compactly H\"older map, we construct Lipschitz approximations by means of
partitions of unity and local Bochner averages.  The defining packing condition for compactly H\"older maps provides
 a uniform $L^p$ bound for the pointwise Lipschitz constants.  The
Poincar\'e inequality and the Hardy--Littlewood maximal operator then produce
uniformly bounded Haj\l asz  gradients, and a weak compactness argument involving Mazur's lemma passes these to the limit.  The converse implication $M^{1,p}_\loc(X \colon V)\subset CH^{p,\alpha_p}(X \colon V)$ follows by taking advantage of the Ahlfors regularity of $X$, the uniformly bounded overlap in the packing condition of the compactly H\"older definition and the identification of the Haj\l asz and Newtonian spaces in this context. For the inclusion $M^{1,p}_\loc(X \colon V)\subset CH_m^{q,\alpha_q}(X \colon V)$ we no longer have a uniformly bounded overlap for admissible coverings. Instead, we apply the metric Morrey estimate we proved in \cite{AC-G26} to  bound each local
H\"older coefficient by an integral of a Sobolev upper gradient.  The pairwise
disjoint contracted balls allow these bounds to be summed, while the strong
maximal inequality at exponent $p/q>1$ accounts for the restriction $q<p$.
Moreover, the Euclidean counterexample concentrates disjoint smooth bumps at rapidly
decreasing scales, while maintaining their Sobolev energies summable. However, the construction ensures that certain admissible balls for the modified compactly H\"older definition
contain their endpoint H\"older oscillations often enough to force a divergent
$p$-sum.

The paper is organized as follows.  Section~2 collects the background on
Sobolev spaces, Poincar\'e inequalities, Morrey estimates, and
compactly H\"older mappings.  In Section~3 we construct the necessary Lipschitz
approximations to prove 
Theorem~\ref{thmA:almost-characterization}.
In Section~4 we
introduce the localized compactly H\"older seminorms and prove
Theorem~\ref{thmB:seminorm-comparison},
including the corresponding comparison on balls.
Section~5 gives the construction of a Euclidean super-critical Sobolev function of integrability $p$ that does not lie in the modified compactly H\"older class of the same exponent.  

\vspace{0.5cm}
\paragraph{\bf Acknowledgements.} The second author wishes to thank Vyron Vellis and Piotr Hajłasz for the valuable discussions on the topic. The authors wish to dedicate the manuscript to Bob Kaufman, especially since his covering and stopping-time argument in \cite{Kaufman} is what essentially led to the definition of the compactly H\"older spaces. Lastly, the second author is partially supported by an AMS-Simons Travel Grant.

\section{Background}
A triplet $(X,d,\mu)$ is called a \textit{metric measure space}  if $(X,d)$ is a metric space and $\mu$ is a nonnegative Borel measure on $X$ that assigns a strictly positive and finite value on all balls in $X$. Throughout this paper, all measures are considered to have the aforementioned properties, even if not stated explicitly. Note that every metric measure space is necessarily separable (see \cite{Gorka21}).

The following notions for measures are also typically assumed in this setting.  
We say that $(X,d,\mu)$ is \textit{$Q$-Ahlfors regular} for some $Q\geq1$,
if there is a constant $C_\mu\geq 1$ such that
$$
C_\mu^{-1}r^Q \leq \mu (B(x,r)) \leq C_\mu r^Q,
$$ for all $x\in X$ and $r\in (0,\diam X)$.
We say a metric space $(X,d)$ is \textit{$Q$-Ahlfors regular} if there is a measure $\mu$ on $X$ such that $(X,d,\mu)$  is $Q$-Ahlfors regular. Recall that if $X$ is Ahlfors regular, then it is a \textit{doubling} metric space, i.e., there is some $N_X\in \N$ such that every ball $B(x,r)$ can be covered by at most $N_X$ balls of radius $r/2$.

For $p\in (0,\infty]$ we denote by $L^p(X\colon V)$ the space of \textit{$p$-integrable} mappings $f\colon X\to V$ in the Bochner sense, and by $L^p_\loc(X\colon V)$ the corresponding space of locally $p$-integrable mappings. Moreover, for a ball $B\subset X$ and $f\in L^1(B\colon V)$ we denote by $f_B$ the Bochner average of $f$ over $B$, i.e., $f_B:= \fint_{B}f\, d\mu = \mu(B)^{-1} \int_{B} f\, d\mu$. If $V=\R$, we write $L^p(X)=L^p(X\colon \R)$ and $L^p_\loc (X)=L^p_\loc(X\colon \R)$. For more details on Lebesgue theory of Banach space valued functions, see \cite[Ch.~3]{Juha-Jeremy-etc-book}.

For $g\in L^1_{\text{loc}}(X)$, define the Hardy--Littlewood maximal function
$$
Mg(x):=
\sup_{r>0}\fint_{B(x,r)} |g|\,d\mu,
$$
and, for $s>0$,
$$
M_sg(x):=\bigl(M(|g|^s)(x)\bigr)^{1/s}.
$$
We recall the standard Hardy--Littlewood maximal theorem on measure doubling spaces (see Section~3.5 in \cite{Juha-Jeremy-etc-book}) for $1<p<\infty$:
$$
\|Mg\|_{L^p(X)}\lesssim\|g\|_{L^p(X)}.
$$
Consequently, if $p>s$, then
\begin{equation}
\|M_sg\|_{L^p(X)}\label{eq: maxim Mq ineq}
=
\|M(|g|^s)\|_{L^{p/s}(X)}^{1/s}
\le C\|g\|_{L^p(X)}.
\end{equation}

\subsection{Sobolev-type mappings}\label{subsec:BackSobolev}
Let $(X,d_X)$ and $(Y,d_Y)$ be two metric spaces.
Given $\alpha\in (0,\infty)$, a mapping $f:X\rightarrow Y$ and a set $B\subset X$, we  define the \textit{$\alpha$-H\"older coefficient} of $f$ on $B$ as
$$
|f|_{\alpha, B}:= \sup\left\{ \frac{d_Y(f(x),f(y))}{[d_X(x,y)]^\alpha}: \, x, y \in B \,\, \text{distinct} \right\}.
$$ If $|f|_{\alpha, B}<\infty$ then we say that $f$ is \textit{$\alpha$-H\"older continuous} in $B$. 

Given an at most countable index set $I$, we denote by $\ell^p(I)$, $p\in (0,\infty)$, the space of real-valued sequences $\{c_i\}_{i\in I}$ with finite $p$-norm $(\sum_{i\in I} c_i^p)^{1/p}<\infty$. We call $\sum_{i\in I} c_i^p$ the \textit{$p$-sum} of the sequence $\{c_i\}_{i\in \N}$.

For the rest of the paper, all index sets are assumed to be at most countable. We now recall the classes of compactly H\"older and  modified compactly H\"older mappings.

\begin{definition}\label{def: CH maps}
    Let $f:X\rightarrow Y$ be a mapping between two arbitrary metric spaces. For $p, \alpha\in(0,\infty)$, we say $f$ is \textit{$(p,\alpha)$-compactly H\"older}, and write $f\in CH^{p,\alpha}(X\colon Y)$, if for any compact set $E\subset X$ and any $\eps\in (0,1)$ there are $r_E>0$ and $C_E>0$  satisfying the following: \\
if $\{B_i\}_{i\in I}$ is a collection of balls $B_i:=B(x_i,r)$ with $x_i\in X$, $r<r_E$ that covers $E$ and $B(x_i,\eps r)\cap B(x_j,\eps r)=\emptyset$ for all distinct $i, j\in I$, then the $p$-sum of the H\"older coefficients of $f$ on $B_i$ is at most $C_E$, i.e.,
\begin{equation}\label{eq: CH-def-inequality}
    \sum\limits_{i\in I} |f|_{\alpha, B_i}^p\leq C_E.
\end{equation} Moreover, we say that $f$ is a \textit{modified $(p,\alpha)$-compactly H\"older}, and write $f\in CH_m^{p,\alpha}(X\colon Y)$, if the above also  holds for coverings of the form $B_i:=B(x_i,r_i)$ with $r_i<r_E$ and $B(x_i,\eps r_i)\cap B(x_j,\eps r_j)=\emptyset$ for all distinct $i, j\in I$.
\end{definition}

Here we follow the convention that if $\{B(x_i,r_i)\}_{i\in I}$ covers $E$, it is implied that $B(x_i,r_i)\cap E\neq\emptyset$ for all $i$, but not all $x_i$  necessarily lie in $E$. Note that  applying the definition on singleton sets yields that compactly H\"older mappings are H\"older continuous on compact sets. 

\begin{remark}\label{rem: old def}
    In \cite{Chron_comp_holder_Minkowski} the class of compactly H\"older maps was initially defined by the second author with $\{B(x_i,r)\}$ being a cover where all balls are of the same radius $r$ (see \cite[Definition 2.3]{Chron_comp_holder_Minkowski}). While that condition is enough to determine the distortion of the Minkowski dimension, it is generally not enough to study finer notions of fractal dimension, such as the intermediate dimensions \cite{Intermediate_dim_introduction}, unless additional regularity is assumed for the underlying space $X$. As a result, while working on the distortion of the intermediate dimensions, the authors in \cite{AC-G26} focused on the modified class $CH_m^{p,\alpha}(X \colon Y)$ and called that  compactly H\"older for simplicity, especially due to how recent the actual compactly H\"older $CH^{p,\alpha}(X \colon Y)$ notion was at the time, with the equivalence to Sobolev mappings being unclear. We henceforth distinguish the two notions, starting in this manuscript. 
\end{remark}

Let $(X,d,\mu)$ be a metric measure space and $(V,\|\cdot \|)$ be an arbitrary Banach space.
Following \cite{HajSob,hajlasz,Y03},
a measurable function  $g:X\to[0,\infty]$  is called \textit{Haj\l{}asz  gradient} of a measurable function $f\colon  X\rightarrow V$ if
there exists a set $E\subset X$ with $\mu(E)=0$ such that
\begin{equation}
\label{fracHajlasz}
\|f(x)-f(y)\|\leq d(x,y)\left[g(x)+g(y)\right],
\end{equation}
for every $x,y\in X\setminus E$.
The collection of all the  Haj\l{}asz gradients of $f$ is denoted by $D(f)$.
Given $p\in(0,\infty)$, the \textit{homogeneous Haj\l asz--Sobolev space}
$\dot{M}^{1,p}(X\colon V)$ is defined as the collection of
all the measurable functions $f\colon  X\rightarrow V$ such that
\begin{equation*}
\Vert f\Vert_{\dot{M}^{1,p}(X\colon V)}:=
\inf_{g\in D(f)}\Vert g\Vert_{L^p(X)}<\infty.
\end{equation*}
Here and thereafter, we make the agreement that for seminorms $\inf\emptyset:=\infty$.
The \textit{inhomogeneous Haj\l asz--Sobolev space} is defined as
$${M}^{1,p}(X\colon V):=\dot{M}^{1,p}(X\colon V)\cap L^p(X\colon V),$$ equipped with the `norm'
\begin{equation*}
\Vert f\Vert_{{M}^{1,p}(X \colon V)}:=\Vert f\Vert_{L^p(X\colon V)}+
\Vert f\Vert_{\dot{M}^{1,p}(X\colon V)}<\infty.
\end{equation*}

The following Morrey estimate was proven in \cite[Corollary~4.6 and Remark~4.7]{AC-G26}.
\begin{lemma}
\label{DOUBembedding-cpt}
Let $(X,d,\mu)$ be a metric measure space, where $\mu$ is  $Q$-Ahlfors regular for some $Q>0$, and let $(V,\|\cdot \|)$ be any Banach space. Suppose $p>Q$
and assume $f:X\to V$ is a continuous function. 
Then, for any compact set $K\subset X$, there exist constants $C\geq1$ and $r_0>0$, both of which are independent of $f$, such that, for all 
balls $B:=B(z,r)$ with $z\in K$ and
$0<r<r_0$ for which $\Vert f\Vert_{\dot{M}^{1,p}(2B\colon V)}<\infty$, one has
\begin{equation}
\label{eq30-DOUB-cpt}
\|f(x)-f(y)\|\leq C[d(x,y)]^{1-Q/p}\,\Vert f\Vert_{\dot{M}^{1,p}(2B\colon V)},
\end{equation}
for all $x,y\in B$. 
\end{lemma}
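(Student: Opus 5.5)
The proof is the classical telescoping argument for Morrey's embedding, carried out for Haj\l asz gradients and localized to $2B$. Fix $B=B(z,r)$ with $z\in K$ and $0<r<r_0$, where $r_0:=\min\{1,\diam X/8\}$ (just $r_0:=1$ if $X$ is unbounded). This choice makes every radius below $4r$ admissible in the lower Ahlfors bound. Set $\alpha:=1-Q/p$. Fix $g\in D(f|_{2B})$ with $\|g\|_{L^p(2B)}\le 2\|f\|_{\dot M^{1,p}(2B\colon V)}$. It suffices to prove the estimate with $\|g\|_{L^p(2B)}$ on the right-hand side, for $x,y$ outside a null set; continuity of $f$ then extends it to all $x,y\in B$.

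\textbf{Step 1: localized averaging sets.} The balls $B(x,s)$ with $x\in B$ may leave $2B$, so I would work instead with the truncated sets $A_s(x):=B(x,s)\cap 2B$ for $x\in B$ and $0<s\le 4r$. Since $d(x,z)<r$, we have $B(x,\min\{s,r\})\subset 2B$, and hence
\[
\mu(A_s(x))\ge \mu(B(x,\min\{s,r\}))\ge C_\mu^{-1}\min\{s,r\}^Q\ge c\,s^Q .
\]
This holds uniformly in $z\in K$; the compactness of $K$ is used only to fix $r_0$ independently of $f$.

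\textbf{Step 2: oscillation on a single set.} Let $A\subset 2B$ have diameter at most $2s$, and let $u,w\in A$ lie outside the exceptional null set of $g$. The Haj\l asz inequality gives
\[
\|f(u)-f(w)\|\le 2s\,(g(u)+g(w)).
\]
Averaging over $u,w\in A$ with Bochner averages, and then applying H\"older's inequality and the measure bound from Step 1 for $A=A_s(x)$, yields
\[
\fint_A\|f-f_A\|\,d\mu\le 4s\fint_A g\,d\mu\le 4s\,\mu(A)^{-1/p}\|g\|_{L^p(2B)}\le C s^{\alpha}\|g\|_{L^p(2B)} .
\]

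\textbf{Step 3: telescoping.} Take $x,y\in B$ that are Lebesgue points of $f$; by the Bochner Lebesgue differentiation theorem in doubling spaces this holds for a.e. point. Put $\rho:=d(x,y)<2r$ and $A_k:=A_{2^{-k}\rho}(x)$. Because consecutive sets satisfy $\mu(A_{k+1})\ge c'\mu(A_k)$, which follows from Step 1 together with $\mu(A_k)\le C_\mu(2^{-k}\rho)^Q$, Step 2 gives
\[
\|f_{A_{k+1}}-f_{A_k}\|\le C(2^{-k}\rho)^\alpha\|g\|_{L^p(2B)} .
\]
Since $\alpha>0$, the geometric series sums, and the Lebesgue point property yields
\[
\|f(x)-f_{A_0}\|\le C\rho^\alpha\|g\|_{L^p(2B)} .
\]
The same bound holds for $y$ with $A'_0:=A_\rho(y)$. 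Both $A_0$ and $A'_0$ lie in $A^*:=A_{2\rho}(x)$, where $2\rho\le 4r$, and $\mu(A^*)\le C\mu(A_0)$ as well as $\mu(A^*)\le C\mu(A'_0)$. Applying Step 2 on $A^*$ therefore bounds $\|f_{A_0}-f_{A'_0}\|$ by $C\rho^\alpha\|g\|_{L^p(2B)}$. Combining the three estimates gives the claimed inequality with a constant depending only on $C_\mu$, $Q$ and $p$.

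\textbf{Main obstacle.} The delicate point is the one addressed in Step 1. Without geodesic structure, balls centered in $B$ with radius comparable to $d(x,y)$, which can be nearly $2r$, need not stay inside $2B$, where the gradient $g$ is controlled. Replacing them by the truncations $A_s(x)$, and checking that these still satisfy the Ahlfors lower bound and comparability of consecutive measures, is what makes the telescoping work. The remaining ingredients are routine: the Bochner-valued Lebesgue differentiation, and the reduction from a.e. pairs to all pairs via continuity.
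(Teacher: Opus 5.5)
The paper gives no proof of this lemma. It quotes it from \cite[Corollary~4.6 and Remark~4.7]{AC-G26}, presumably proved there in the locally $Q$-homogeneous setting of that paper, which would explain why $C$ and $r_0$ are allowed to depend on $K$. Your argument is a correct, self-contained proof of the classical Haj\l asz--Morrey telescoping type.

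The individual steps check out:
\begin{itemize}
\item The truncations $A_s(x)=B(x,s)\cap 2B$ are exactly what a non-geodesic space requires.
\item The lower bound $\mu(A_s(x))\ge C_\mu^{-1}4^{-Q}s^Q$ holds for $s\le 4r$, since $B(x,\min\{s,r\})\subset 2B$ and $\min\{s,r\}\ge s/4$.
\item Your $r_0$ keeps every radius in play below $\diam X$.
\item Chaining through $A^*=A_{2\rho}(x)$ closes the estimate.
\end{itemize}
Compared with the citation, your route gives transparency and a slightly stronger statement. Under global Ahlfors regularity you get $r_0=\min\{1,\diam X/8\}$ and $C=C(C_\mu,Q,p)$, with no dependence on $K$, and you never use completeness or properness.

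Three small points need tidying.
\begin{enumerate}
\item If $\|f\|_{\dot M^{1,p}(2B\colon V)}=0$, you cannot choose $g$ with $\|g\|_{L^p(2B)}\le 2\|f\|_{\dot M^{1,p}(2B\colon V)}$. Instead take $\|g\|_{L^p(2B)}\le \|f\|_{\dot M^{1,p}(2B\colon V)}+\eta$ and let $\eta\to 0$.
\item You should say why $f\in L^1(2B\colon V)$, so that the Bochner averages make sense, since $X$ need not be proper here. Integrability follows from the Haj\l asz inequality together with $g\in L^p(2B)\subset L^1(2B)$ and $\mu(2B)<\infty$. Strong measurability comes from continuity of $f$ and separability of $X$.
\item The Lebesgue-point and null-set reduction is unnecessary. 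Once $2^{-k}\rho\le r$ one has $A_k=B(x,2^{-k}\rho)$, so continuity gives $f_{A_k}\to f(x)$ at every $x\in B$. The Haj\l asz inequality is only ever used inside double averages, where the exceptional null set is invisible.
\end{enumerate}
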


We now turn to the definition of the Newtonian Sobolev spaces. We say that a Borel function
$g:X\to[0,\infty]$ is an \textit{upper gradient} of a map
$f:X\to V$ if for every rectifiable curve
$\gamma:[0,1]\to X$ we have
\[
    \|f(\gamma(0))-f(\gamma(1))\|
    \leq
    \int_\gamma g\,ds.
\]
The notion was introduced by Heinonen and Koskela in \cite{HeinKoskQCbegin}
under a different name, and was employed by Shanmugalingam in
\cite{NagesPHDNewtonianSob} and \cite{NagesNewtonianSob} in order to define an appropriate notion of
Sobolev mappings in the metric measure space context. More specifically,
for $p>1$,  the \textit{Newtonian--Sobolev space}
$N^{1,p}(X\colon V)$ is defined as the collection of equivalence classes
of mappings $f:X\to V$ in $L^p(X\colon V)$ with an upper gradient in
$L^p(X)$. In fact, it is enough to only consider $p$-weak upper gradients (see \cite[Lemma~6.2.2]{Juha-Jeremy-etc-book}).

The local counterpart of $M^{1,p}(X\colon V)$, denoted by $M^{1,p}_{\loc}(X\colon V)$, is the collection of equivalence classes
of mappings $f\colon X\to V$ that lie in $M^{1,p}(K\colon V)$ for every compact $K\subset X$. The local counterpart of $N^{1,p}(X\colon V)$, denoted by $N^{1,p}_{\loc}(X\colon V)$, is defined accordingly.

We say that $X$ supports an \textit{$s$-Poincar\'e inequality} for some $s\geq1$ if, there exist $C_P>0$ and $\lambda\geq1$ such that, for every Lipschitz function $u:X\rightarrow\R$, we have
\begin{equation}\label{eq: PI}
\fint_{B(z,r)} |u-u_{B(z,r)}|\,d\mu
\le
C_P r
\left(
\fint_{B(z,\lambda r)}
(\operatorname{Lip}u)^s\,d\mu
\right)^{1/s}
\end{equation}
for all balls $B(z,r)\subset X$, 
where
$$\operatorname{Lip}u(x):=\limsup_{r\rightarrow0} \sup_{y\in B(x,r)}\frac{|u(x)-u(y)|}{r}$$ 
denotes the upper pointwise Lipschitz constant of $u$;  see \cite[Theorem 8.4.2]{Juha-Jeremy-etc-book}. We call the constant $\lambda$ the \textit{Poincar\'e radial constant}. 

Due to Keith--Zhong \cite{OpenPoincareKeith},  since $X$ is complete, doubling, and supports a $p$-Poincar\'e inequality, we have that $X$ supports a $s$-Poincar\'e inequality for some $1\leq s<p$. We fix such $s\in[1,p)$ for the rest of the manuscript. 


\section{Proof of characterization of Sobolev mappings}

Throughout this section we assume $(X,d,\mu)$ is a complete  $Q$-Ahlfors regular metric measure space supporting a $p$-Poincar\'e inequality with $p>Q\geq1$. We first provide a standard Lipschitz partition of unity with appropriate control of the pointwise Lipschitz constant.

\begin{lemma}\label{le: partition of unity}
Let $f\in L^1_{\text{loc}}(X\colon V)$. For every $t>0$, there exists $\Phi_tf:X\rightarrow V$ with
$$
\Phi_t f(x)=\sum_i \phi_i(x) f_{B(x_i,t)},
$$
where $\{x_i\}_i$ is a $t$-net, the balls $B(x_i,t/2)$ are pairwise disjoint, and $\{\phi_i\}_i$ is a Lipschitz partition of unity with
$$
0\le \phi_i\le 1,\qquad 
\operatorname{supp}\phi_i\subset B(x_i,2t),
\qquad
\sum_i\phi_i\equiv 1,
\qquad
\operatorname{Lip}\phi_i\lesssim t^{-1}.
$$
Moreover, $\Phi_t f$ is locally Lipschitz (i.e. Lipschitz on compact sets) and there is a constant $A\ge 1$, depending only on the doubling constant, such that
$$
\operatorname{Lip}(\Phi_t f)(x)
\lesssim
\frac{1}{t}
\fint_{B(x,At)}
\|f-f_{B(x,At)}\|\,d\mu.
$$
\end{lemma}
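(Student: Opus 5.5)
We will use the standard discrete convolution construction on a doubling space. First, fix $t>0$ and choose a maximal $t$-separated set $\{x_i\}_i$. Maximality gives that the balls $B(x_i,t)$ cover $X$; since the points are $t$-separated, the balls $B(x_i,t/2)$ are pairwise disjoint. By separability the index set is at most countable.

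Doubling (via Ahlfors regularity) together with the disjointness of the balls $B(x_i,t/2)$ gives a bounded overlap statement. Concretely, every point lies in at most $N$ of the balls $B(x_i,2t)$, where $N$ depends only on the doubling constant. The reason is that the centers $x_i$ with $x\in B(x_i,2t)$ are $t$-separated points of $B(x,2t)$, and their number is bounded by comparing $\mu(B(x_i,t/2))\gtrsim t^Q$ with $\mu(B(x,3t))\lesssim t^Q$.

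Next we build the partition of unity. Set $\psi_i(x):=\max\{0,\min\{1,2-d(x,x_i)/t\}\}$. Then $\psi_i=1$ on $B(x_i,t)$, $\psi_i$ is supported in $B(x_i,2t)$, and $\psi_i$ is $1/t$-Lipschitz. Put $\Psi:=\sum_j\psi_j$. By the covering and bounded overlap properties, $1\le\Psi\le N$, and $\Psi$ is locally a finite sum of $1/t$-Lipschitz functions, hence $N/t$-Lipschitz. Define $\phi_i:=\psi_i/\Psi$. A quotient estimate gives $\operatorname{Lip}\phi_i\lesssim t^{-1}$, and the remaining listed properties are immediate.

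Now $\Phi_tf$ is well defined because $f\in L^1_{\loc}$ makes each average $f_{B(x_i,t)}$ finite. Near any point only at most $N$ of the terms are nonzero, so $\Phi_tf$ is a locally finite sum of Lipschitz maps. On a compact set, a finite subcover by balls of radius $t$ combined with the global Lipschitz bound on each $\phi_i$ then gives that $\Phi_tf$ is Lipschitz there.

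The main step is the pointwise Lipschitz estimate. Fix $x$ and set $B:=B(x,At)$ with $A$ to be chosen. Since $\sum_i\phi_i\equiv1$, for $y$ close to $x$ we can write
\[
\Phi_tf(y)-\Phi_tf(x)=\sum_i(\phi_i(y)-\phi_i(x))\bigl(f_{B(x_i,t)}-f_B\bigr).
\]
The only indices contributing are those with $x$ or $y$ in $B(x_i,2t)$. For $y\in B(x,t)$ this forces $d(x,x_i)<3t$, so $B(x_i,t)\subset B(x,4t)$ and we take $A=4$. For these indices Ahlfors regularity gives
\[
\|f_{B(x_i,t)}-f_B\|\le\fint_{B(x_i,t)}\|f-f_B\|\,d\mu\le\frac{\mu(B)}{\mu(B(x_i,t))}\fint_B\|f-f_B\|\,d\mu\lesssim\fint_B\|f-f_B\|\,d\mu.
\]
Here we use that the Bochner average commutes with subtraction of a constant and that $\|\int g\|\le\int\|g\|$. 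If $\diam X$ is comparable to $t$, we use the doubling measure bound in place of the Ahlfors bound $r<\diam X$.

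Combining this with $|\phi_i(y)-\phi_i(x)|\lesssim d(x,y)/t$ and the fact that at most $2N$ terms are nonzero, we get
\[
\|\Phi_tf(y)-\Phi_tf(x)\|\lesssim\frac{d(x,y)}{t}\fint_B\|f-f_B\|\,d\mu.
\]
Taking the $\limsup$ as $y\to x$ yields the claimed bound on $\operatorname{Lip}(\Phi_tf)(x)$. The only care required is keeping the radii consistent so that $A$ depends only on the doubling constant.
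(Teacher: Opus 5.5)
Your proposal is correct and follows essentially the same route as the paper: the same maximal $t$-separated net, the same normalized cutoffs $\max\{0,\min\{1,2-d(x,x_i)/t\}\}$, and the same comparison of each $f_{B(x_i,t)}$ with $f_{B(x,At)}$ via Ahlfors regularity and bounded overlap. The only difference is cosmetic. You subtract $f_{B(x,At)}$ directly using $\sum_i(\phi_i(y)-\phi_i(x))=0$, whereas the paper uses the antisymmetrized double sum; your version is slightly shorter and gives the explicit value $A=4$.
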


\begin{proof}
Fix a maximal $t$-separated set $\{x_i\}_i$. Then $X=\bigcup_i B(x_i,t)$, while the balls $B(x_i,t/2)$ are pairwise disjoint. 

Fix the Lipschitz cutoff functions 
$\widetilde\phi_i:X\rightarrow\R$
with
$$
\widetilde{\phi}_i(x)= \max \left\{0, \min\left\{1, 2-\frac{d(x,x_i)}{t}\right\}\right\}, \qquad\text{for all} \,\, x\in X.
$$
This choice yields
$$
0\le \widetilde\phi_i\le 1,\qquad
\widetilde\phi_i\equiv 1 \text{ on } B(x_i,t),
\qquad
\operatorname{supp}\widetilde\phi_i\subset B(x_i,2t),
\qquad
\operatorname{Lip}\widetilde\phi_i\leq t^{-1}.
$$
Then
$$
1\le \sum_j \widetilde\phi_j\le C,
$$
and we set
$$
\phi_i=\frac{\widetilde\phi_i}{\sum_j\widetilde\phi_j}.
$$
This gives the desired partition of unity $\{\phi_i\}_{i\in I}$. It remains to show that $\Phi_t f(x)=\sum_i \phi_i(x) f_{B(x_i,t)}$ is locally Lipschitz, and the desired upper bound on $\operatorname{Lip} (\Phi_tf)$.

Let $K\subset X$ be a compact set. Then the set of indices
$$
I_K:=\left\{i\in I: B(x_i,2t)\cap K\neq\emptyset \right\}
$$ has finitely many elements. 
Thus, for any $x,y\in K$, we have
$$
\|\Phi_tf(x)-\Phi_t f(y)\|\leq \sum_{i\in I_K} \|f_{B(x_i,t)}\|\,|\phi_i(x)-\phi_i(y)|\lesssim t^{-1} d(x,y),
$$ by the Lipschitz continuity of $\phi_i$, and with $C(\lesssim)$ depending only on $K$ and $t$.

Let $x,y\in X$ with $d(x,y)\le t$. Using $\sum_i\phi_i\equiv 1$ to write 
$$
\Phi_tf(x)=\sum_i \phi_i(x)\sum_j \phi_j(y) f_{B(x_i,t)}=\sum_{i,j} \phi_i(x) \phi_j(y) f_{B(x_i,t)},
$$ and similarly for $\Phi_tf(y)$,
we have
\begin{equation}\label{eq: diff of Phi}
\Phi_t f(x)-\Phi_t f(y)
=
\frac12\sum_{i,j}
\bigl(\phi_i(x)\phi_j(y)-\phi_j(x)\phi_i(y)\bigr)(f_{B(x_i,t)}-f_{B(x_j,t)}).
\end{equation}
Only uniformly many indices $i,j$ contribute, and for all such indices the balls $B(x_i,t)$ and $B(x_j,t)$ are contained in $B(x,At)$, for a uniform constant $A$.
By Ahlfors regularity, these balls have measure comparable to $\mu(B(x,At))$, up to a constant depending only on $A$ and the doubling constant. Hence, for every contributing $i\in I$,
$$
\begin{aligned}
\left\| f_{B(x_i,t)} - f_{B(x,At)} \right\|
&= \left\| \frac{1}{\mu(B(x_i,t))} 
    \int_{B(x_i,t)} \bigl(f(z)-f_{B(x,At)}\bigr)\, d\mu(z) \right\| \\
&\leq \frac{\mu(B(x,At))}{\mu(B(x_i,t))}
    \fint_{B(x,At)}
    \left\|f-f_{B(x,At)}\right\|\, d\mu \\
&\lesssim
    \fint_{B(x,At)}
    \left\|f-f_{B(x,At)}\right\|\, d\mu .
\end{aligned}
$$
The above yields that
\begin{equation}\label{eq: diff of avg f balls}
\begin{split}
\|f_{B(x_i,t)}-f_{B(x_j,t)}\|
&\le \|f_{B(x_i,t)}-f_{B(x,At)}\|+\|f_{B(x_j,t)}-f_{B(x,At)}\|
\\
&\lesssim
\fint_{B(x,At)}
\|f-f_{B(x,At)}\|\,d\mu.
\end{split}
\end{equation}
On the other hand, by $\operatorname{Lip}\phi_i\lesssim t^{-1}$ and $\phi_i\leq 1$, we have
\begin{equation}\label{eq: diff prod phi}
|\phi_i(x)\phi_j(y)-\phi_j(x)\phi_i(y)|\leq|\phi_j(y)||\phi_i(x)-\phi_i(y)|+|\phi_i(y)||\phi_j(x)-\phi_j(y)|
\lesssim
\frac{d(x,y)}{t}.
\end{equation}
Hence, by \eqref{eq: diff of Phi}, \eqref{eq: diff of avg f balls}, and \eqref{eq: diff prod phi} we have
$$
\|\Phi_t f(x)-\Phi_t f(y)\|
\lesssim
\frac{d(x,y)}{t}
\fint_{B(x,At)}
\|f-f_{B(x,At)}\|\,d\mu.
$$
Using the definition of $\operatorname{Lip}\Phi_t f$ and taking $r<t$ yields the desired upper bound due to the above inequality.
\end{proof}

We aim to apply the above partition of unity on compactly H\"older mappings, and so in the next lemma we compare the $L^p$-norm of the resulting pointwise Lipschitz constant with the compactly H\"older constant $C_K$ from Definition~\ref{def: CH maps}. This is essential in achieving the finiteness of the corresponding upper gradients in later arguments.

\begin{lemma}\label{le: compHold gives Lip in Lp}
Suppose $f\in CH^{p, \alpha_p}(X \colon V)$ and $K_0\subset X$ is compact. Then there exist $t_0>0$ and $C_{K_0}<\infty$ such that, for every $0<t<t_0$,
$$
\int_{K_0} \bigl(\operatorname{Lip}\Phi_t f\bigr)^p\,d\mu
\le C_{K_0}.
$$
\end{lemma}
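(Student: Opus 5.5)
We bound $\operatorname{Lip}\Phi_t f$ pointwise by a single local Hölder coefficient via Lemma~\ref{le: partition of unity}. We then sum over a net of scale $t$ and invoke the packing condition of Definition~\ref{def: CH maps}.

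\textbf{Step 1: pointwise bound.} Fix $x$ and the constant $A$ from Lemma~\ref{le: partition of unity}. For $z\in B(x,At)$ we have $\|f(z)-f_{B(x,At)}\|\le \fint_{B(x,At)}\|f(z)-f(w)\|\,d\mu(w)\le |f|_{\alpha_p,B(x,At)}(2At)^{\alpha_p}$. Hence
$$
\operatorname{Lip}(\Phi_t f)(x)\lesssim t^{\alpha_p-1}\,|f|_{\alpha_p,B(x,At)} = t^{-Q/p}\,|f|_{\alpha_p,B(x,At)}.
$$

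\textbf{Step 2: discretize.} Pick a maximal $t$-separated set $\{y_k\}$ in $K_0$. Then the balls $B(y_k,t)$ cover $K_0$ and the balls $B(y_k,t/2)$ are pairwise disjoint. For $x\in B(y_k,t)$ we have $B(x,At)\subset B(y_k,(A+1)t)=:B_k$. Therefore
$$
\int_{K_0}(\operatorname{Lip}\Phi_tf)^p\,d\mu\le\sum_k \int_{B(y_k,t)}(\operatorname{Lip}\Phi_tf)^p\,d\mu\lesssim \sum_k \mu(B(y_k,t))\,t^{-Q}|f|^p_{\alpha_p,B_k}\lesssim \sum_k |f|_{\alpha_p,B_k}^p .
$$
Here Ahlfors regularity gives $\mu(B(y_k,t))\le C_\mu t^Q$, so the factor $t^{-Q}$ cancels exactly. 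This cancellation is why the exponent $\alpha_p=1-Q/p$ is the right one. We need $t<\diam X$ where applicable; if $X$ is bounded this is harmless for small $t$.

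\textbf{Step 3: apply the CH condition.} The family $\{B(y_k,R)\}$ with $R=(A+1)t$ covers $K_0$, and all balls have the same radius $R$. Take $\eps=\frac{1}{2(A+1)}\in(0,1)$. Then $B(y_k,\eps R)=B(y_k,t/2)$, and these balls are pairwise disjoint. Apply Definition~\ref{def: CH maps} with $E=K_0$ and this $\eps$. This gives $r_{K_0}$ and $C$ such that $\sum_k|f|^p_{\alpha_p,B_k}\le C$ whenever $R<r_{K_0}$. Setting $t_0=\min\{r_{K_0}/(A+1),\diam X\}$ yields the claim for all $0<t<t_0$, with $C_{K_0}$ a multiple of $C$ independent of $t$.

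\textbf{Main obstacle.} The main obstacle is matching the radius and disjointness in Step 3: the enlargement factor $A$ must be absorbed into the choice of $\eps$. Since the definition allows any $\eps\in(0,1)$ and requires only disjointness of the $\eps$-shrunk balls, choosing $\eps$ depending on $A$ resolves this. The remaining point is the measurability and finiteness of $\operatorname{Lip}\Phi_tf$. Finiteness follows from local Lipschitzness (Lemma~\ref{le: partition of unity}). For measurability, the pointwise estimate lets us use an upper integral, or equivalently note that $\operatorname{Lip}$ of a continuous function is Borel.
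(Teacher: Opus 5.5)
Your proposal is correct and follows essentially the same route as the paper. Both arguments use the pointwise bound $\operatorname{Lip}(\Phi_t f)(x)\lesssim t^{-Q/p}|f|_{\alpha_p,B(x,At)}$, a maximal $t$-separated net in $K_0$ with the enlarged balls $B(y_k,(A+1)t)$, Ahlfors regularity to cancel $t^{-Q}$, and the packing condition with $\varepsilon$ chosen in terms of $A$. The only differences are cosmetic: you use a union bound where the paper uses a measurable partition $\{E_j\}$, and you take $\varepsilon=1/(2(A+1))$ (shrunken balls $B(y_k,t/2)$) where the paper takes $1/(4(A+1))$; both choices are valid.
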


\begin{proof}
Fix $t>0$.  By Lemma \ref{le: partition of unity} we have for all $x\in K_0$ that
$$
\operatorname{Lip}(\Phi_t f)(x)
\lesssim
\frac1t
\fint_{B(x,At)}
\|f-f_{B(x,At)}\|\,d\mu.
$$
Set $B=B(x,At)$ and note that
$$
\begin{aligned}
\fint_B \|f(x)-f_B\|\,d\mu(x)
&=
\fint_B
\left\|
\fint_B (f(x)-f(y))\,d\mu(y)
\right\|
d\mu(x)
\\
&\leq
\fint_B\fint_B \|f(x)-f(y)\|\,d\mu(y)\,d\mu(x)
\\
&\leq
\sup_{x,y\in B} \|f(x)-f(y)\|
\\
&\leq (2At)^{\alpha_p} |f|_{{\alpha_p},B(x,At)}.
\end{aligned}
$$
Therefore
$$
\operatorname{Lip}(\Phi_t f)(x)
\lesssim
t^{-1}t^{\alpha_p} |f|_{{\alpha_p},B(x,At)}
=
t^{-Q/p}|f|_{{\alpha_p},B(x,At)},
$$
due to $1-{\alpha_p}=Q/p$.
Raising to the power $p$ and integrating over $K_0$ yields
\begin{equation}\label{eq: Lp norm of Lip with Holder}
\int_{K_0} \bigl(\operatorname{Lip}\Phi_t f\bigr)^p\,d\mu
\lesssim
\int_{K_0} t^{-Q}|f|_{{\alpha_p},B(x,At)}^p\,d\mu(x).
\end{equation} It remains to show that the right-hand side of \eqref{eq: Lp norm of Lip with Holder} is bounded.

Fix a maximal $t$-separated set $\{z_j\}_j\subset K_0$. Then
$$
K_0\subset \bigcup_j B(z_j, t),
$$
while the balls $B(z_j, t/2)$ are pairwise disjoint.

Fix a measurable partition $\{E_j\}_j$ of $K_0$ with $E_1=K_0\cap B(z_1,t)$ and 
$$ 
E_j= (K_0\cap B(z_j,t))\setminus\cup_{i<j} B(z_i,t),
$$ for all $j\geq 2$. This choice ensures that $E_j\subset B(z_j, t)$.
If $x\in E_j$, then $B(x,At)\subset B(z_j,(A+1)t)$.
By choice of $E_j$ and monotonicity of the H\"older seminorm with respect to the ball, i.e.,
$$|f|_{{\alpha_p},B(x,At)}\le|f|_{{\alpha_p},B(z_j,(A+1)t)},$$ we have
\begin{equation}\label{eq: lem bound by Holder semin}
\begin{aligned}
\int_{K_0}
t^{-Q}|f|_{{\alpha_p},B(x,At)}^p\,d\mu(x)
&\le
\sum_j
\int_{E_j}
t^{-Q}|f|_{{\alpha_p},B(z_j,(A+1)t)}^p\,d\mu(x)
\\
&\lesssim
\sum_j |f|_{{\alpha_p},B(z_j,(A+1)t)}^p,
\end{aligned}
\end{equation}also due to the $Q$-Ahlfors regularity $\mu(E_j)\le \mu(B(z_j, t))\lesssim t^Q$.

Set
$$
B_j:=B(z_j,(A+1)t) \qquad\text{and} \qquad \varepsilon:=\frac{1}{4(A+1)}
$$
The balls $B_j$ cover $K_0$, while
$$
B(z_j,\varepsilon(A+1)t)=B(z_j, t/4)
$$
are pairwise disjoint, since the centers $z_j$ are $t$-separated. For $t>0$ sufficiently small, say $t<t_0$, all radii $(A+1)t$ are less than the scale $r_{K_0}$ in the compactly H\"older condition for the compact set $K_0$ and the scale $r_\mu$ from the Ahlfors regularity condition. Therefore,
$$
\sum_j |f|_{{\alpha_p},B_j}^p\le C_{K_0}.
$$
Since $A$ and $\varepsilon$ are fixed, so are $t_0$ and  $C_{K_0}$. The above inequality along with \eqref{eq: Lp norm of Lip with Holder} and \eqref{eq: lem bound by Holder semin} prove the claim.
\end{proof}

We are now preparing for the inclusion of compactly H\"older mappings in the Haj\l asz Sobolev space. Thus, we need the following lemma on the appropriate Haj\l asz gradient for a Lipschitz map, which we plan on applying to the aforementioned partition of unity map. 

\begin{lemma}\label{lem: Haj grad for Lip}
Let $U\subset X$ be a nonempty, connected, measurable set, and let $K\subset X$ be compact with
$\dist(U,X\setminus K)
>0.$ 
Suppose $u:X\to V$ is Lipschitz on a neighborhood of $K$, and let $s\geq1$ be as in the Poincar\'e inequality \eqref{eq: PI}. Then there is a constant $C>0$, such that
$$
h_u:=C\,M_s\bigl(\chi_K\operatorname{Lip}u\bigr)
$$
is a Haj\l asz  gradient of $u$ on $U$, i.e.,
\begin{equation}
\label{eq: Banach maximal fun upper grad in nbhd}
    \|u(x)-u(y)\|
\le
d(x,y)\bigl(h_u(x)+h_u(y)\bigr)
\end{equation}
for a.e. $x,y\in U$.
\end{lemma}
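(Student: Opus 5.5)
The plan is to prove \eqref{eq: Banach maximal fun upper grad in nbhd} by the standard telescoping-ball argument, after reducing to real-valued functions, and to split into near and far pairs according to $\delta:=\dist(U,X\setminus K)>0$. The reduction is as follows. Fix $\varphi\in V^*$ with $\|\varphi\|\le1$. Then $u_\varphi:=\varphi\circ u$ is real-valued and Lipschitz near $K$, with $\operatorname{Lip}u_\varphi\le\operatorname{Lip}u$ pointwise. If we show $|u_\varphi(x)-u_\varphi(y)|\le C\,d(x,y)\,\bigl(M_s(\chi_K\operatorname{Lip}u)(x)+M_s(\chi_K\operatorname{Lip}u)(y)\bigr)$ with $C$ independent of $\varphi$, then taking the supremum over $\varphi$ (Hahn--Banach) gives the claim. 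Since $u$ is continuous on a neighborhood of $K$, every point of $U$ is a Lebesgue point of $u_\varphi$. Thus the estimate will in fact hold for all $x,y\in U$, not merely a.e. One caveat: the Poincar\'e inequality \eqref{eq: PI} is stated for globally Lipschitz functions. I will apply it to a McShane extension of $u_\varphi$ from a neighborhood of $K$, and only on balls where it agrees with $u_\varphi$.

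\emph{Near pairs.} Let $x,y\in U$ with $r:=d(x,y)<\delta/(4\lambda)$. Put $B_k:=B(x,2^{-k}r)$ for $k\ge0$, and $B_k':=B(y,2^{-k}r)$ for $k\geq 1$. The standard chain gives $|u_\varphi(x)-(u_\varphi)_{B_0}|\le\sum_k|(u_\varphi)_{B_{k+1}}-(u_\varphi)_{B_k}|$, and each term is $\lesssim\fint_{B_k}|u_\varphi-(u_\varphi)_{B_k}|\,d\mu$ by doubling. Applying \eqref{eq: PI} with $s$ on each $B_k$ bounds the sum by
\[
\textstyle C\sum_k 2^{-k}r\bigl(\fint_{\lambda B_k}(\operatorname{Lip}u)^s\bigr)^{1/s}\le C r\,M_s(\chi_K\operatorname{Lip}u)(x).
\]
Here the cutoff $\chi_K$ is harmless because $\lambda B_k\subset B(x,\lambda r)\subset K$. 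The same argument handles $y$, and the two chains are linked through $B(x,2r)\subset K$. This gives the near-pair inequality.

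\emph{Far pairs.} Let $x,y\in U$ with $d(x,y)\ge\delta/(4\lambda)=:\rho$. This is where connectedness of $U$ enters, and I expect it to be the main obstacle, because the balls around $x$ and $y$ leave $K$. Cover $\overline U\subset K$ by a maximal $\rho/2$-separated net. Its cardinality $N$ is bounded in terms of $K$ and $\delta$ only, by doubling. Connectedness of $U$ yields a $\rho$-chain $x=w_0,w_1,\dots,w_m=y$ in $U$ with $d(w_i,w_{i+1})<\rho$. We may take $m\le N+1$ by letting the chain visit distinct net cells, i.e.\ by taking a path in the finite graph of net cells that meet $U$. To each link apply the Poincar\'e/telescoping estimate on $B(w_i,2\lambda\rho)\subset K$, crudely, and use Ahlfors regularity. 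This gives
\[
|u_\varphi(w_i)-u_\varphi(w_{i+1})|\le C\rho\,\rho^{-Q/s}\Bigl(\int_K(\operatorname{Lip}u)^s\Bigr)^{1/s}.
\]
Conversely, for every $z\in U$, taking a ball of radius $\diam K$ in the maximal function gives
\[
M_s(\chi_K\operatorname{Lip}u)(z)\ge c_K\Bigl(\int_K(\operatorname{Lip}u)^s\Bigr)^{1/s}.
\]
Summing the links gives $|u_\varphi(x)-u_\varphi(y)|\le C(N,\rho,K)\,d(x,y)\,M_s(\chi_K\operatorname{Lip}u)(x)$, since $d(x,y)\ge\rho$. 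Finally, take $C$ to be the maximum of the near-pair and far-pair constants.

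The delicate points are two. First, one must verify that the telescoping in the link estimate may be run with Lebesgue points of a continuous function rather than averages. Second, the chain length must be bounded uniformly, i.e.\ independently of $u$, $\varphi$ and the pair $x,y$. This uniformity is exactly what compactness of $K$ and the positivity of $\dist(U,X\setminus K)$ provide.
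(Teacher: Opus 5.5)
\paragraph{Gap in the far-pair link estimate.} Your Hahn--Banach reduction is fine, and so is your near-pair argument, which is essentially the paper's Case~1. The gap is in the far-pair link estimate. You telescope to the \emph{point values} $u_\varphi(w_i)$ at the intermediate chain points, and you bound every level by the global integral over $K$. By Ahlfors regularity the $k$-th term is then of size
\[
2^{-k}\rho\Bigl(\frac{1}{\mu(B(w_i,\lambda 2^{-k}\rho))}\int_K(\operatorname{Lip}u)^s\,d\mu\Bigr)^{1/s}\approx(2^{-k}\rho)^{1-Q/s}\Bigl(\int_K(\operatorname{Lip}u)^s\,d\mu\Bigr)^{1/s},
\]
and the series over $k$ converges only when $s>Q$. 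Nothing gives you $s>Q$: the fixed $s$ is merely some exponent in $[1,p)$ coming from Keith--Zhong. For instance, $s=1$ is allowed in $\R^n$ with $n\ge 2$.

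For $s<Q$ your link inequality is false at arbitrary points. The spike $\max\{0,1-d(\cdot,w_i)/\eta\}$ has oscillation $1$ near $w_i$, while $\int_K(\operatorname{Lip}u)^s\lesssim\eta^{Q-s}\to 0$. For $s=Q$, a logarithmic spike does the same. Choosing the $w_i$ more cleverly does not help in general, because a connected $U$ can be thin. Take, e.g., two balls in $\R^3$ joined by a segment, with $s=1$, and let $u$ be a tent of height $1$ and width $\eta$ around the middle of the segment. Any chain from $x$ to $y$ inside $U$ is then forced onto the segment, and its number of links is bounded independently of $\eta$. So some link has a jump of fixed size, while $\|\operatorname{Lip}u\|_{L^1(K)}\to 0$. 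In short, values of $u$ at points of $U$ other than $x,y$ are not controlled by $M_s(\chi_K\operatorname{Lip}u)(x)$ and $M_s(\chi_K\operatorname{Lip}u)(y)$ unless $s>Q$.

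\paragraph{How to fix it.} The fix is what the paper does: chain \emph{ball averages} instead of point values. Let $B_j=B(z_j,\rho)$ be overlapping balls centred at a $\rho$-net of $U$. Connectedness gives the chain, and finiteness of the net bounds its length. Then write
\[
|u(x)-u(y)|\le|u(x)-u_{B_{i_x}}|+\sum_{k}|u_{B_{j_k}}-u_{B_{j_{k+1}}}|+|u_{B_{i_y}}-u(y)|.
\]
Each middle term needs only one application of the Poincar\'e inequality on $3\lambda B_{j_{k+1}}\subset K$, with no telescoping to a point. It is then $\lesssim\rho\,M_s(\chi_K\operatorname{Lip}u)(x)$ after enlarging to a ball about $x$ of radius comparable to $\diam U$; your crude global bound also works at this step. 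The point-to-average telescoping is used only at $x$ and $y$, where it is controlled by $M_s(\chi_K\operatorname{Lip}u)$ at those very points.

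Alternatively, an $s$-Poincar\'e inequality implies an $s'$-Poincar\'e inequality for every $s'\ge s$, and $p>Q$. So you could run your argument with some $s'\in(Q,p)$. That suffices for the later applications in the paper, but it proves the lemma only with $M_{s'}$, not with the given $s$.
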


\begin{proof}
We begin with several observations. We first show that it is enough to prove \eqref{eq: Banach maximal fun upper grad in nbhd} for all real valued  functions that are Lipschitz in a neighborhood of $K$. Indeed, suppose that \eqref{eq: Banach maximal fun upper grad in nbhd} holds true for all $u:X\to \R$ that are Lipschitz in a neighborhood of $K$, and let $w:X\to V$ be Lipschitz in a neighborhood of $K$. Fix a dense subset $\{u_j:j\in \N\}$ of $w(K)$, and define for every $j\in \N$ a function $w_j:X\to \R$ with
$$
w_j(x)=\| w(x)-u_j\|,
$$ for all $x\in X$. Then $w_j$ is Lipschitz in the same neighborhood of $K$ as $w$, with $\operatorname{Lip} w_j\leq \operatorname{Lip} w$ for all $j\in \N$. Since $w_j$ are real valued, we have
\begin{equation}\label{eq: char upper grad for real valued}
    |w_j(x)-w_j(y)|\lesssim d(x,y)(h_{w_j}(x)+h_{w_j}(y))
\end{equation} for all $x,y\in U\setminus \cup_j N_j$ with $\mu(N_j)=0$ and all $j\in \N$. Note that the constant $C(\lesssim)$ above is independent of $j$. 

Let $x,y\in U \setminus \cup_j N_j$. By density of $\{u_j:j\in \N\}$ in $w(K)$, there is a sequence $\{u_{j_n}\}$ that converges to $w(x)$. Due to 
$$
| \|w(x)-u_{j_n}\|-\|w(y)-u_{j_n}\||\leq \|w(x)-w(y)\|
$$ and
\begin{align*}
\left| \|w(x)-u_{j_n}\|-\|w(y)-u_{j_n}\|\right|&\geq \| w(y)-u_{j_n}\|-\|w(x)-u_{j_n}\| \\ &\geq \|w(x)-w(y)\|-2\|w(x)-u_{j_n}\|
\end{align*}
we have that
$$
\lim_{n\to \infty} | \|w(x)-u_{j_n}\|-\|w(y)-u_{j_n}\||= \|w(x)-w(y)\|.
$$ Thus,
$$
\| w(x)-w(y)\|\leq \sup_j | \|w(x)-u_{j}\|-\|w(y)-u_{j}\||,
$$ which by choice of $w_j$, and the inequalities \eqref{eq: char upper grad for real valued} and $\sup_j h_{w_j}\leq h_w$ implies the desired inequality for the Banach valued map $w$.
Thus, for the rest of the proof we assume without loss of generality that $u:X\to\R$. 

We also note that it is enough to prove the estimate for points $x,y\in U$ for which the maximal functions below are finite. This holds for a.e. $x,y\in U$. Fix such $x,y\in U$. Since $u$ is continuous, every point is a Lebesgue point of $u$ (see for instance \cite[p.~77]{Juha-Jeremy-etc-book}). 

We can also assume $\diam U>0$ since otherwise $U$ would consist of only one point and there is nothing to prove. Note that $U\subset K$, where $K$ is compact, and so $\diam U<\infty$. Furthermore, we may assume without loss of generality that $u$ is globally Lipschitz. Indeed, by assumption, there exists an open set $V\supset K$ such that $u|_V$ is Lipschitz. Appealing to the McShane extension theorem (see \cite{mcshane}) gives us that $u|_V$ has a globally Lipschitz extension to $X$. Since replacing $u$ by this extension does not change either $u$ or $\operatorname{Lip}u$ on $V$, the estimates below are not affected, and the claim follows.

Next, we establish a point-to-average estimate. Specifically, we claim that there exists a constant $C=C(\mu,PI)>0$ such that 
\begin{equation}\label{eq:point-average-estimate}
|u(z)-u_{B(z,r)}|
\leq
Cr\,M_s\bigl(\chi_K\operatorname{Lip}u\bigr)(z),
\end{equation}
for all $z\in U$ and $r>0$ satisfying $B(z,\lambda r)\subset K$, where $\lambda\geq1$ is the Poincar\'e radial constant. Fix  $z\in U$ and $r>0$ satisfying $B(z,\lambda r)\subset K$, and for $k\in \Z$ non-negative, set 
$$
B_k^z:=B(z,2^{-k}r).
$$
Since $z$ is a Lebesgue point of $u$,
$$
u_{B_k^z}\to u(z)
\qquad\text{as }k\to\infty .
$$
Hence, by a finite telescoping argument and triangle inequality,
$$
\begin{aligned}
|u(z)-u_{B_0^z}|
&\le
\sum_{k=0}^{\infty}
|u_{B_{k+1}^z}-u_{B_k^z}|.
\end{aligned}
$$
Because $B_{k+1}^z\subset B_k^z$ and the radii differ by a factor $2$, Ahlfors regularity gives
$$
\begin{aligned}
|u_{B_{k+1}^z}-u_{B_k^z}|
&\le
\fint_{B_{k+1}^z}|u-u_{B_k^z}|\,d\mu
\\
&\lesssim
\fint_{B_k^z}|u-u_{B_k^z}|\,d\mu,
\end{aligned}
$$
which along with the previous inequality imply
\begin{equation}\label{eq: u minus avg est}
|u(z)-u_{B_0^z}|
\lesssim
\sum_{k=0}^{\infty}
\fint_{B_k^z}
|u-u_{B_k^z}|\,d\mu .
\end{equation}
Applying the $s$-Poincar\'e inequality to $B_k$, and observing that
\[
\lambda B_k^z\subset \lambda B_0^z\subset K,
\]
we obtain
\begin{align*}
|u_{B_{k+1}^z}-u_{B_k^z}|
&\lesssim 2^{-k}r
\left(
\fint_{\lambda B_k^z}
(\chi_K\operatorname{Lip}u)^s\,d\mu
\right)^{1/s}
\\
&\lesssim
2^{-k}r\,M_s\bigl(\chi_K\operatorname{Lip}u\bigr)(z).
\end{align*}
Therefore,
$$
\begin{aligned}
|u(z)-u_{B_0^z}|
&\le
\sum_{k=0}^{\infty}
|u_{B_{k+1}^z}-u_{B_k^z}|
\lesssim r\,M_s\bigl(\chi_K\operatorname{Lip}u\bigr)(z).
\end{aligned}
$$
This finishes the proof of \eqref{eq:point-average-estimate}.

Let $\delta:=\dist(U,X\setminus K)>0$ and set
$$
\rho
:=
\min\left\{
\diam U,\frac{\delta}{16\lambda}
\right\}.
$$
Notice that 
\begin{equation}\label{eq:ball-in-K}
B(z,r)\subset K\quad\mbox{whenever $z\in U$ and $0<r<\delta$.}
\end{equation}
Indeed, otherwise there would exist $w\in X\setminus K$ with
$d(z,w)<\delta$, contradicting the definition of $\delta$.

To proceed, we divide the proof into two cases.
\medskip

\noindent
\textbf{Case 1: $d(x,y)<\rho$}. Define 
$$
B_x:=B(x,d(x,y))\quad\mbox{and}\quad B_y:=B(y,d(x,y)).
$$
Observe that $B_x\cup B_y\subset 2B_x$. 
Notice that if $z\in U$ and $0<r<\delta$, then $B(z,r)\subset K$. Thus, since $2\lambda d(x,y)\leq 2\lambda\rho<\delta$,
we have that $\lambda 2B_x\subset K$. Granted this, we can apply \eqref{eq:point-average-estimate} to the balls $B_x$ and $B_y$ to conclude that
\begin{equation}\label{eq:point-average-estimate-X}
|u(x)-u_{B_x}|
\leq
Cd(x,y)\,M_s\bigl(\chi_K\operatorname{Lip}u\bigr)(x),
\end{equation}
and
\begin{equation}\label{eq:point-average-estimate-Y}
|u(y)-u_{B_y}|
\leq
Cd(x,y)\,M_s\bigl(\chi_K\operatorname{Lip}u\bigr)(y).
\end{equation}
We now compare the two averages over $B_x$ and $B_y$. Since 
$B_x\subset 2B_x$ and $B_y\subset 2B_x$ with $\mu(B_x)\approx\mu(B_y)\approx\mu(2B_x)$, we have
\begin{equation}
\label{eq: middle estimate}
\begin{aligned}
|u_{B_x}-u_{B_y}|
&\leq
|u_{B_x}-u_{2B_x}|
+
|u_{B_y}-u_{2B_x}|
\\
&\lesssim 
\fint_{2B_x}|u-u_{2B_x}|\,d\mu
\\
&\lesssim d(x,y)
\left(
\fint_{\lambda 2B_x}
(\chi_K\operatorname{Lip}u)^s\,d\mu
\right)^{1/s}
\\
&\lesssim d(x,y)\,M_s\bigl(\chi_K\operatorname{Lip}u\bigr)(x).
\end{aligned}
\end{equation}
Combining this with \eqref{eq:point-average-estimate-X} and \eqref{eq:point-average-estimate-Y} implies that the inequality \eqref{eq: Banach maximal fun upper grad in nbhd} holds in Case 1.
\medskip

\noindent
\textbf{Case 2: $d(x,y)\geq\rho$.} In this case, we start by choosing a maximally $\rho$-separated set $\{z_i\}_{i\in I}\subset U$. Observe that
$$
U\subset\bigcup_{i\in I}B(z_i,\rho).
$$
Since the balls $B(z_i,\rho/2)$ are pairwise disjoint, and $U$ is bounded, it follows from the Ahlfors regularity that the index set $I$ is finite, where the bound on its cardinality depends only on $\mu$, $\rho$, and $U$.

We claim that we can connect $x$ and $y$ by a finite chain of these $\rho$-balls. More specifically, choose indices $i_x,i_y\in I$ such that
\[
x\in B(z_{i_x},\rho)
\qquad\text{and}\qquad
y\in B(z_{i_y},\rho).
\]
We will show that there exists a finite sequence of indices
$i_x=j_1,j_2,\ldots,j_m=i_y$
such that
\begin{equation}
\label{chainballs}
B(z_{j_k},\rho)\cap B(z_{j_{k+1}},\rho)\neq\varnothing
\end{equation}
for every $k=1,\ldots,m-1$. To prove this claim, for each $j\in I$, set
\[
V_j:=B(z_j,\rho)\cap U.
\]
Then each $V_j$ is a nonempty relatively open subset of $U$, and
\[
U=\bigcup_{j\in I} V_j.
\]
Let $I_x$ be the set of all $j\in I$ having the following property: there exist an integer $\ell\geq1$ and indices $i_x=j_1,j_2,\ldots,j_\ell=j$ such that 
\[
V_{j_k}\cap V_{j_{k+1}}\neq\varnothing
\]
for every $k=1,\ldots,\ell-1$. Note that $I_x\neq\varnothing$ since $i_x\in I_x$. Let 
\[
U_1:=\bigcup_{j\in I_x}V_j
\qquad\text{and}\qquad
U_2:=\bigcup_{j\in I\setminus I_x}V_j
\]
Then $U_1$ and $U_2$ are relatively open in $U$ and $U=U_1\cup U_2$. Moreover, $U_1$ and $U_2$  are disjoint. Indeed, suppose that $U_1\cap U_2\neq\varnothing$. Then there exist $j\in I_x$ and $k\in I\setminus I_x$ such that $V_j\cap V_k\neq\varnothing$.
Since $j\in I_x$, there are indices $i_x=j_1,j_2,\ldots,j_\ell=j$
such that $V_{j_k}\cap V_{j_{k+1}}\neq\varnothing$
for every $k=1,\ldots,\ell-1$. Thus, $V_{i_x}, V_{j_2},\dots V_{j}$
is a chain connecting $V_{i_x}$ and $V_{j}$. Since $V_j\cap V_k\neq\varnothing$, we have that the sets $V_{i_x}, V_{j_2},\dots V_{j}, V_k$ form
a chain connecting $V_{i_x}$ and $V_{k}$. Hence, $k\in I_x$, which is a contradiction. Thus,  $U_1\cap U_2=\varnothing$, as wanted. Combining these observations with the fact that $i_x\in I_x$ implies $U_1\neq\varnothing$, we can deduce from the connectedness of $U$ that $U_2=\emptyset$. Hence, $I_x=I$. In particular, $i_y\in I=I_x$ and therefore,  $x$ and $y$ can be connected by a finite chain of these $\rho$-balls. 

Note that we can assume the indices in \eqref{chainballs} are distinct.
For each $j\in I$, set $B_{j}:=B(z_{j},\rho)$. Then we can write:
\begin{equation}
\label{chainingestimate}
\begin{aligned}
|u(x)-u(y)|&\leq
|u(x)-u_{B_{i_x}}|
+
\sum_{k=1}^{m-1}
|u_{B_{j_k}}-u_{B_{j_{k+1}}}|
+
|u_{B_{i_y}}-u(y)|
\\
&=:J_1+J_2+J_3.
\end{aligned}
\end{equation}

For $J_1$, we write
\begin{equation}\label{eq:J1-1}
J_1\leq|u(x)-u_{B(x,2\rho)}|+|u_{B(x,2\rho)}-u_{B_{i_x}}|
\end{equation}
Since $2\lambda\rho<\delta$, by \eqref{eq:ball-in-K}
we have that $B(x,2\lambda \rho)\subset K$. Granted this, we can apply \eqref{eq:point-average-estimate} to the ball $B(x,2\rho)$ to conclude that 
\begin{equation}\label{eq:J1-2}
|u(x)-u_{B(x,2\rho)}|\lesssim \rho\, M_s\bigl(\chi_K\operatorname{Lip}u\bigr)(x).
\end{equation}
Moreover, since $x\in B_{i_x}$, we have that $B_{i_x}\subset B(x,2\rho)\subset 3B_{i_x}$. Hence, $\mu(B_{i_x})\approx\mu(B(x,2\rho))$ by Ahlfors regularity. Granted this, we can apply the $s$-Poincar\'e inequality to $B(x,2\rho)$, where $B(x,2\lambda \rho)\subset K$, to deduce that
\[
\begin{aligned}
|u_{B(x,2\rho)}-u_{B_{i_x}}|&\leq
\fint_{B_{i_x}}|u-u_{B(x,2\rho)}|\,d\mu
\\
&\lesssim\fint_{B(x,2\rho)}|u-u_{B(x,2\rho)}|\,d\mu
\\
&\lesssim 2\rho
\left(
\fint_{B(x,2\lambda\rho)}
(\chi_K\operatorname{Lip}u)^s\,d\mu
\right)^{1/s}
\\
&\lesssim\rho\,
M_s\bigl(\chi_K\operatorname{Lip}u\bigr)(x).
\end{aligned}
\]
Combining this with \eqref{eq:J1-1} and \eqref{eq:J1-2}
gives
\begin{equation}\label{eq:J1-3}
J_1\lesssim \rho\,M_s\bigl(\chi_K\operatorname{Lip}u\bigr)(x).
\end{equation}
Similarly, we obtain
\begin{equation}\label{eq:J3-1}
J_3\lesssim \rho\,M_s\bigl(\chi_K\operatorname{Lip}u\bigr)(y).
\end{equation}
To estimate $J_2$, fix $k\in\{1,\dots,m-1\}$. By construction, $B_{j_k}\cap B_{j_{k+1}}\neq\varnothing$, from which we can conclude that $B_{j_k}\subset 3B_{j_{k+1}}\subset 5B_{j_k}$, where $3\lambda B_{j_{k+1}}\subset K$ by \eqref{eq:ball-in-K}. Granted this, we can appeal to Ahlfors regularity and the $s$-Poincar\'e inequality, to conclude that
\begin{equation}
\label{eq:J_2}
\begin{aligned}
|u_{B_{j_k}}-u_{B_{j_{k+1}}}|
&\leq
|u_{B_{j_k}}-u_{3B_{j_{k+1}}}|+|u_{3B_{j_{k+1}}}-u_{B_{j_{k+1}}}|
\\
&\leq\fint_{B_{j_k}}|u-u_{3B_{j_{k+1}}}|\,d\mu
+\fint_{B_{j_{k+1}}}|u-u_{3B_{j_{k+1}}}|\,d\mu
\\
&\lesssim
\fint_{3B_{j_{k+1}}}|u-u_{3B_{j_{k+1}}}|\,d\mu
\\
&\lesssim 3\rho
\left(
\fint_{3\lambda B_{j_{k+1}}}
(\chi_K\operatorname{Lip}u)^s\,d\mu
\right)^{1/s}.
\end{aligned}
\end{equation}
To estimate the last integral average, observe that
since $3\lambda\rho+\diam U\leq 4\lambda\diam U$, we have
$$
3\lambda B_{j_{k+1}}=B(z_{j_{k+1}},3\lambda\rho)\subset B(x,4\lambda\diam U)\subset 
B(z_{j_{k+1}},5\lambda\diam U).
$$
Thus, by Ahlfors regularity we have
\begin{equation*}
\begin{aligned}
\left(
\fint_{3\lambda B_{j_{k+1}}}
(\chi_K\operatorname{Lip}u)^s\,d\mu
\right)^{1/s}
&\lesssim\left(
\fint_{B(x,4\lambda\diam U)}
(\chi_K\operatorname{Lip}u)^s\,d\mu
\right)^{1/s}
\\
&\leq
M_s\bigl(\chi_K\operatorname{Lip}u\bigr)(x).
\end{aligned}
\end{equation*}
All together, this, \eqref{eq:J_2}, the fact that $m\leq\#I<\infty$, \eqref{chainingestimate}, \eqref{eq:J1-3}, and \eqref {eq:J3-1}, 
and imply that the inequality \eqref{eq: Banach maximal fun upper grad in nbhd} also holds in Case 2.
Hence, $h_u=C\,M_s(\chi_K\operatorname{Lip}u)$ is a Haj\l asz  gradient of $u$ on $U$, for some $C=C(\lesssim)$ determined above.
\end{proof}

We also record the following, which is a counterpart of Lemma~\ref{lem: Haj grad for Lip} without the connectedness assumption, but with a restriction on the allowed distance to the boundary for the ``internal'' subset $U$ of $K$.

\begin{lemma}\label{lem: Haj grad for Lip - OLD}
Let $U\subset X$ be a nonempty measurable set, and let $K\subset X$ be compact with
$$
\lambda\diam U<\dist(U,X\setminus K), 
$$
where $\lambda$ is the Poincar\'e radial constant.
Suppose $u:X\to V$ is Lipschitz on a neighborhood of $K$, and let $s\geq1$ be as in the Poincar\'e inequality \eqref{eq: PI}. Then there is a constant $C=C(\mu,PI, U, K)>0$, such that
$$
h_u:=C\,M_s\bigl(\chi_K\operatorname{Lip}u\bigr)
$$
is a Haj\l asz  gradient of $u$ on $U$, i.e.,
\begin{equation}\label{eq: Banach maximal fun upper grad in nbhd - OLD}
    \|u(x)-u(y)\|
\le
d(x,y)\bigl(h_u(x)+h_u(y)\bigr)
\end{equation}
for a.e. $x,y\in U$.
\end{lemma}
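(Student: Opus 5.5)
The argument follows the proof of Lemma~\ref{lem: Haj grad for Lip} almost verbatim, except that the chaining step of Case~2 disappears. The hypothesis $\lambda\diam U<\dist(U,X\setminus K)$ lets a single ball centred at a point of $U$ contain all of $U$ while its $\lambda$-dilate still lies in $K$. As before, we first reduce to real-valued $u$ using the functions $w_j=\|w-u_j\|$ with $\{u_j\}$ dense in $w(K)$. We then replace $u$ by a McShane extension, which is globally Lipschitz and agrees with $u$ near $K$. Finally, we restrict to $x,y\in U$ at which $M_s(\chi_K\operatorname{Lip}u)$ is finite; this excludes only a null set. Set $\delta:=\dist(U,X\setminus K)$, so that $B(z,r)\subset K$ whenever $z\in U$ and $r<\delta$.

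The first ingredient is the point-to-average estimate \eqref{eq:point-average-estimate}:
\[
|u(z)-u_{B(z,r)}|\le Cr\,M_s(\chi_K\operatorname{Lip}u)(z)\quad\text{whenever } z\in U,\ B(z,\lambda r)\subset K.
\]
Its proof is the telescoping argument over $B(z,2^{-k}r)$ combined with the $s$-Poincar\'e inequality. That argument never used connectedness, so it carries over unchanged.

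Now fix $x,y\in U$ and put $\rho:=d(x,y)\le\diam U$. Choose $\rho'$ with $\rho\le\rho'<\delta/\lambda$; this is possible because $\lambda\rho\le\lambda\diam U<\delta$. For example, take $\rho'=\max\{\rho,\theta\}$ with $\theta$ slightly above $\diam U$ when $\rho=\diam U$; in fact taking $\rho'=\rho$ works in every case. Let $B:=B(x,\rho')$, so that $\lambda B\subset K$ and $y\in \overline{B}$. The difficulty is that the two points have balls of different centres, so we compare $u(x)$ and $u(y)$ through their own averages:
\[
|u(x)-u(y)|\le |u(x)-u_{B(x,\rho')}|+|u_{B(x,\rho')}-u_{B(y,\rho')}|+|u_{B(y,\rho')}-u(y)|.
\]
The two outer terms are bounded by \eqref{eq:point-average-estimate}, since $B(y,\lambda\rho')\subset K$ as well.

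For the middle term, use $B(x,\rho'),B(y,\rho')\subset B(x,2\rho')$ and Ahlfors regularity, exactly as in \eqref{eq: middle estimate}. Then apply the Poincar\'e inequality on $B(x,2\rho')$. This is where the main obstacle lies: the dilate $B(x,2\lambda\rho')$ need not be contained in $K$, since the hypothesis only controls $\lambda\diam U$, not $2\lambda\diam U$. There are two ways to handle it.

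\emph{First option.} Avoid the doubled ball by comparing the two averages through a smaller intermediate ball centred at the midpoint scale. When $d(x,y)$ is small compared with $\delta/(2\lambda)$, we are in Case~1 of Lemma~\ref{lem: Haj grad for Lip} directly, since $2\lambda d(x,y)<\delta$.

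\emph{Second option.} For $d(x,y)\ge\eta:=\delta/(4\lambda)$, drop the requirement $\chi_K$ on the enlarged ball. Since $\lambda\diam U<\delta$, there is a fixed scale $R:=\diam U+\eta'$ with $\lambda R<\delta$ and $U\subset B(x,R)$ for all $x\in U$. Pick a point $x_0\in U$ and the single ball $B_0:=B(x_0,R)$. Then $\lambda B_0\subset K$, because $\lambda R<\delta$ once $\eta'$ is chosen small, and $U\subset B_0$. Write
\[
|u(x)-u(y)|\le|u(x)-u_{B_0}|+|u_{B_0}-u(y)|.
\]
To bound $|u(x)-u_{B_0}|$, note that $B(x,r_1)\subset B_0\subset B(x,2R)$ with $r_1:=R-d(x,x_0)$. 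Thus we telescope from $x$ up to a ball $B(x,r)$ with $\lambda r<\delta$, which handles $|u(x)-u_{B(x,r)}|$ via \eqref{eq:point-average-estimate}. We then compare $u_{B(x,r)}$ with $u_{B_0}$ by Poincar\'e on $B_0$. Measure comparability holds because $r\approx R$, with constants depending on $U$ and $K$, which the statement allows. This gives a bound of the form $\lesssim R\,M_s(\chi_K\operatorname{Lip}u)(x)$, since $\lambda B_0\subset B(x,2\lambda R)$ and averages over $\lambda B_0$ are controlled by the maximal function at $x$ by Ahlfors regularity.

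Finally, $R\le C(U,K)\,\eta\le C\,d(x,y)$, which gives \eqref{eq: Banach maximal fun upper grad in nbhd - OLD} with $C=C(\mu,PI,U,K)$. The delicate point throughout is choosing every Poincar\'e ball so that its $\lambda$-dilate stays inside $K$. That is exactly what the strict inequality $\lambda\diam U<\dist(U,X\setminus K)$ provides, with the slack absorbed into the constant.
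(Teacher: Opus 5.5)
Your argument is correct once it is read as a case split, but it is organized differently from the paper's proof.

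For $d(x,y)<\delta/(2\lambda)$, the Case~1 argument of Lemma~\ref{lem: Haj grad for Lip} applies verbatim. It never used connectedness; it only needs $2\lambda d(x,y)<\delta$.

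For $d(x,y)\ge\delta/(4\lambda)$, your fixed ball $B_0=B(x_0,R)$ with $\diam U<R<\delta/\lambda$ works:
\begin{itemize}
\item $\lambda B_0\subset K$;
\item $B(x,R-d(x,x_0))\subset B_0$, and $R-d(x,x_0)\ge R-\diam U>0$;
\item $R<4d(x,y)$.
\end{itemize}
So every constant depends only on $\mu$, the Poincar\'e data, and the gap $\delta/\lambda-\diam U$, as the statement allows.

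Your two ``options'' are not alternatives, though: each covers only one regime, and you need both. The sentence about ``a smaller intermediate ball centred at the midpoint scale'' is not an argument, since a general metric space has no midpoints. Drop it, together with the abandoned discussion of $\rho'$.

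The paper avoids the case split when it removes the obstacle you correctly identified, namely that $B(x,2\lambda d(x,y))$ may leave $K$. It uses the slack in the strict inequality to fix $\varepsilon\in(0,1)$ with $(1+\varepsilon)\lambda\diam U<\dist(U,X\setminus K)$. It then shrinks the balls at the two points to $B(x,\varepsilon d(x,y))$ and $B(y,\varepsilon d(x,y))$. Both lie in $B^*=B(x,(1+\varepsilon)d(x,y))$, and $\lambda B^*\subset K$. The same three-term estimate then runs with the Poincar\'e inequality applied on $B^*$, at the cost of a factor of order $\varepsilon^{-Q}$ from comparing measures.

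The paper's version is shorter and uniform in scale. Yours reuses Lemma~\ref{lem: Haj grad for Lip} directly and shows more transparently why $\lambda\diam U<\dist(U,X\setminus K)$ substitutes for connectedness: a single ball containing $U$ replaces the chain of balls in Case~2. In both versions the constant degenerates as $\lambda\diam U$ approaches $\dist(U,X\setminus K)$.
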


\begin{proof}
As in the proof of Lemma~\ref{lem: Haj grad for Lip}, we can assume $u$ is a globally Lipschitz real-valued function, and that the points $x,y\in U$ are such that the maximal functions below are finite. 

Since $\lambda\diam U<\dist(U,X\setminus K)$, we can choose $\varepsilon\in(0,1)$ small enough so that
\begin{equation}
\label{eq: epsilon fat}   
(1+\varepsilon)\lambda\diam U<\dist(U,X\setminus K)
\end{equation}
Let $B_x:=B(x, \varepsilon d(x,y))$, $B_y:=B(y,\varepsilon d(x,y))$,  and $B^*:=B(x,(1+\varepsilon)d(x,y))$, and write 
\begin{equation}\label{eq:3.3 neq}
|u(x)-u(y)|\leq|u(x)-u_{B_x}|+|u_{B_x}-u_{B_y}|+|u_{B_y}-u(y)|.
\end{equation}
Observe that by \eqref{eq: epsilon fat},
$$
\lambda B_x\cup \lambda B_y\subset \lambda B^*
\subset K,
$$
and so the point-to-average estimate \eqref{eq:point-average-estimate}
gives for $z\in\{x,y\}$,
\begin{equation}\label{eq:ptavgest2}
|u(z)-u_{B_z}|
\leq
Cd(x,y)\,M_s\bigl(\chi_K\operatorname{Lip}u\bigr)(z),
\end{equation}
On the other hand, arguing as in the proof of \eqref{eq: middle estimate} gives 
$$
\begin{aligned}
\left|u_{B_x}-u_{B_y}\right|
&\leq
\left|u_{B_x}-u_{B^*}\right|
+
\left|u_{B_y}-u_{B^*}\right| \\
&\lesssim_{\varepsilon}
\fint_{B^*}
\left|u-u_{B^*}\right|\,d\mu \\
&\lesssim_{\varepsilon}
d(x,y)
\left(
\fint_{\lambda B^*}
\bigl(\chi_K \operatorname{Lip}u\bigr)^s\,d\mu
\right)^{1/s} \\
&\lesssim_{\varepsilon}
d(x,y)\,
M_s\bigl(\chi_K \operatorname{Lip}u\bigr)(x).
\end{aligned}
$$
This, along with \eqref{eq:ptavgest2} and \eqref{eq:3.3 neq}, implies $h_u:=C\,M_s(\chi_K\operatorname{Lip}u)$ is a Haj\l asz  gradient of $u$ on $U$, for some $C$ determined above. Note that this constant $C$ depends on $\varepsilon$ and deteriorates as $\varepsilon\to0$.
\end{proof}

Lastly, we record a convergence lemma that allows us to restrict our attention to Lipschitz sequences instead.

\begin{lemma}\label{le: limit of Haj is Haj}
Let $U\subset X$ with $\mu(U)<\infty$. Suppose $u_j\to u$ in $L^p(U\colon V)$, and each $u_j$ has a Haj\l asz  gradient $h_j\in L^p(U)$ satisfying
$$
\sup_j \|h_j\|_{L^p(U)}<\infty.
$$
Then $u\in M^{1,p}(U\colon V)$.
\end{lemma}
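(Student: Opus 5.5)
The plan is to prove this with a Mazur-lemma argument in $L^p(U)$ combined with the stability of the Haj\l asz inequality under pointwise a.e.\ limits. The sequence $\{h_j\}$ is bounded in $L^p(U)$ and $1<p<\infty$, so $L^p(U)$ is reflexive. Passing to a subsequence (not relabeled), we get $h_j\rightharpoonup h$ weakly in $L^p(U)$ for some $h\in L^p(U)$ with $h\ge 0$ a.e. Weak lower semicontinuity of the norm then gives $\|h\|_{L^p(U)}\le\liminf_j\|h_j\|_{L^p(U)}<\infty$.

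Next we apply Mazur's lemma to the tail sequences. For each $k$ there is a finite convex combination
\[
g_k=\sum_{j=k}^{N_k}\lambda_{j}^{(k)}h_j
\]
with $\|g_k-h\|_{L^p(U)}<2^{-k}$. We form the same convex combinations of the maps,
\[
v_k:=\sum_{j=k}^{N_k}\lambda_{j}^{(k)}u_j .
\]
Since $u_j\to u$ in $L^p(U\colon V)$ and the $\lambda^{(k)}_j$ are convex weights on indices $j\ge k$, we get $\|v_k-u\|_{L^p(U\colon V)}\le\sup_{j\ge k}\|u_j-u\|_{L^p(U\colon V)}\to0$. The key observation is that the Haj\l asz condition \eqref{fracHajlasz} is preserved under convex combinations. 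If $x,y$ lie outside the union of the countably many exceptional null sets $E_j$, then the triangle inequality in $V$ gives
\[
\|v_k(x)-v_k(y)\|\le\sum_j\lambda^{(k)}_j\|u_j(x)-u_j(y)\|\le d(x,y)\bigl(g_k(x)+g_k(y)\bigr).
\]

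For the limit, pass to a further subsequence so that $v_k\to u$ and $g_k\to h$ pointwise off a null set $F\subset U$. This is possible because $L^p$ convergence implies a.e.\ convergence along a subsequence; the rapid rate $2^{-k}$ already gives it directly for $g_k$. For $x,y\in U\setminus\bigl(F\cup\bigcup_j E_j\bigr)$ we let $k\to\infty$ in the displayed inequality and obtain $\|u(x)-u(y)\|\le d(x,y)(h(x)+h(y))$. Hence $h\in D(u)$ on $U$, with $\|u\|_{\dot M^{1,p}(U\colon V)}\le\|h\|_{L^p(U)}\le\sup_j\|h_j\|_{L^p(U)}$.

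It remains to check $u\in L^p(U\colon V)$, which is immediate since $u$ is an $L^p$ limit. The hypothesis $\mu(U)<\infty$ is not needed for this step; it would only matter if one wanted $L^1$-type convergence. Therefore $u\in M^{1,p}(U\colon V)$. The only delicate points are bookkeeping. The exceptional null sets must be collected into one countable union before taking limits. Mazur's lemma must be applied to tails, so that the combinations of the $u_j$ still converge to $u$. We also need $p>1$ for reflexivity, which holds since $p>Q\ge1$. I expect no step to be a real obstacle, but the Mazur/tail step is the one requiring care.
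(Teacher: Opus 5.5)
Your proposal is correct and follows essentially the same route as the paper. Both arguments use weak compactness of $\{h_j\}$ in the reflexive space $L^p(U)$, Mazur's lemma applied to tails, and the same convex combinations of the $u_j$, which preserve the Haj\l asz inequality off a countable union of null sets. Both then pass to a pointwise a.e.\ limit along a subsequence. Your extra bookkeeping is accurate: the nonnegativity of $h$, the bound $\|v_k-u\|\le\sup_{j\ge k}\|u_j-u\|$, and the observation that $\mu(U)<\infty$ is never actually used. It only makes explicit what the paper leaves implicit.
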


\begin{proof}
Since $p>1$, the space $L^p(U)$ is reflexive. Passing to a subsequence, we may assume that
$$
h_j\rightharpoonup h
\qquad\text{weakly in }L^p(U)
$$
for some $h\in L^p(U)$. By Mazur's lemma (see for instance \cite[p.~19]{Juha-Jeremy-etc-book}), there are convex combinations 
$$
g_N=\sum_{j\ge N} a_{j,N}h_j,
$$
of $h_j$, where each sum is finite, $a_{j,N}\ge 0$, and $\sum_{j\ge N}a_{j,N}=1$, with $g_N$ converging strongly to $h$ in $L^p(U)$ as $N\rightarrow\infty$.

Define the convex combination of the corresponding $u_j$ functions by
$$
v_N=\sum_{j\ge N} a_{j,N}u_j.
$$
Since $u_j\to u$ in $L^p(U \colon V)$, we also have $v_N\to u$ in $L^p(U \colon V)$.
Moreover, $g_N$ is a Haj\l asz  gradient of $v_N$, because
\begin{equation}\label{eq: convex comb est}
\begin{aligned}
\|v_N(x)-v_N(y)\|
&\le
\sum_{j\ge N} a_{j,N} \|u_j(x)-u_j(y)\|
\\
&\le
d(x,y)
\sum_{j\ge N} a_{j,N}(h_j(x)+h_j(y))
\\
&=
d(x,y)(g_N(x)+g_N(y))
\end{aligned}
\end{equation}
for a.e. $x,y\in U$.

Passing to further subsequences if necessary, we may assume
$$
v_N(x)\to u(x),
\qquad
g_N(x)\to h(x)
$$
for a.e. $x\in U$. 
Taking limits along these subsequences in \eqref{eq: convex comb est} gives
$$
\|u(x)-u(y)\|
\le
d(x,y)(h(x)+h(y)),
$$
for a.e. $x,y\in U$. Thus, $h\in L^p(U)$ is a Haj\l asz  gradient of $u$, and hence $u\in M^{1,p}(U\colon V)$.
\end{proof}

We are now ready to establish the admittedly more challenging inclusion from Theorem~\ref{thmA:almost-characterization}, treated separately from the others for the sake of the exposition.

\begin{theorem}\label{thm: CH are M1ploc}
If $f:X\to V$ is $(p,\alpha_p)$-compactly H\"older, then $f\in M^{1,p}_{\text{loc}}(X\colon V).
$
\end{theorem}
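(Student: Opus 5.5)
Fix a compact set $E\subset X$; we must show $f\in M^{1,p}(E\colon V)$. The plan is to approximate $f$ by the locally Lipschitz maps $\Phi_t f$ of Lemma~\ref{le: partition of unity}. We then obtain uniformly $L^p$-bounded Haj\l asz gradients for them from Lemmas~\ref{le: compHold gives Lip in Lp} and \ref{lem: Haj grad for Lip} (or \ref{lem: Haj grad for Lip - OLD}), and pass to the limit with Lemma~\ref{le: limit of Haj is Haj}. Since $M^{1,p}$ membership on a set passes to subsets (restrict the gradient), it suffices to prove $f\in M^{1,p}(U\colon V)$ for a set $U\supset E$ that fits into the lemmas.

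\textbf{Choice of sets.} Cover $E$ by finitely many balls $B(y_k,\rho)$ with $\rho$ small, and take $U_k:=B(y_k,\rho)$. Let $K_k:=\overline{B}(y_k,R)$ with $R>(2\lambda+1)\rho$. In a complete doubling space closed balls are compact, so $K_k$ is compact, and $\lambda\diam U_k\le 2\lambda\rho<\dist(U_k,X\setminus K_k)$. Hence Lemma~\ref{lem: Haj grad for Lip - OLD} applies and avoids any connectedness issue.

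Gluing the finitely many pieces requires care, because Haj\l asz gradients are not local. I would handle this by taking one large ball instead: $U:=B(y,\rho)\supset E$ and $K:=\overline{B}(y,R)$ with $R>(2\lambda+1)\rho$. This covers all of $E$ at once, so no gluing is needed. Let $K_0:=\overline{B}(y,R+1)$, so that for $t<1/A$ the balls $B(x,At)$, $x\in K$, lie in $K_0$.

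\textbf{Uniform bounds.} Set $u_t:=\Phi_t f$. By Lemma~\ref{le: partition of unity} it is locally Lipschitz, and a local Lipschitz bound on a neighborhood of $K$ makes it Lipschitz there; this suffices for Lemma~\ref{lem: Haj grad for Lip - OLD}. That lemma gives the Haj\l asz gradient $h_t=C M_s(\chi_K\operatorname{Lip}u_t)$ on $U$, with $C$ independent of $t$. Since $s<p$, \eqref{eq: maxim Mq ineq} yields
\[
\|h_t\|_{L^p(U)}\lesssim \|\chi_K\operatorname{Lip}u_t\|_{L^p(X)}\le \Bigl(\int_{K_0}(\operatorname{Lip}\Phi_tf)^p\,d\mu\Bigr)^{1/p}\le C_{K_0}^{1/p}
\]
for $t<t_0$, by Lemma~\ref{le: compHold gives Lip in Lp}. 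This bound is uniform in $t$.

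\textbf{Convergence and conclusion.} Next we check that $u_t\to f$ in $L^p(U\colon V)$, in fact uniformly on $U$. Since $f\in CH^{p,\alpha_p}$, applying the definition to singletons and using a finite cover shows that $f$ is uniformly continuous (indeed H\"older) on $K_0$. Because $\sum_i\phi_i=1$ and $\phi_i(x)\neq 0$ forces $B(x_i,t)\subset B(x,3t)$, we get
\[
\|u_t(x)-f(x)\|\le \sup_{d(z,x)<3t}\|f(z)-f(x)\|\to 0 .
\]
Since $\mu(U)<\infty$, uniform convergence gives $L^p$ convergence. Lemma~\ref{le: limit of Haj is Haj} along $t_j\to0$ then gives $f\in M^{1,p}(U\colon V)$, and hence $f\in M^{1,p}(E\colon V)$, including $f\in L^p(E\colon V)$ by continuity.

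\textbf{Main obstacle.} The delicate step is the uniformity of the Haj\l asz constant in Lemma~\ref{lem: Haj grad for Lip - OLD}. That constant depends only on $U$, $K$ and the data, not on $u$, so it holds uniformly in $t$; I will verify this explicitly.
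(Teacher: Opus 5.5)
Your proposal is correct and follows essentially the same route as the paper. Both approximate $f$ by $\Phi_t f$ and obtain the Haj\l asz gradients $C\,M_s(\chi_K\operatorname{Lip}\Phi_t f)$ on a ball $U$ from Lemma~\ref{lem: Haj grad for Lip - OLD}, with $K$ a concentric closed ball of radius more than $(2\lambda+1)$ times that of $U$. These gradients are bounded in $L^p$ uniformly in $t$ via Lemma~\ref{le: compHold gives Lip in Lp} and the maximal theorem, $\Phi_t f\to f$ uniformly on $U$, and Lemma~\ref{le: limit of Haj is Haj} concludes.

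The differences are cosmetic. You start from a compact $E$ and enclose it in a single ball, which the paper leaves implicit in ``since $B_0$ was arbitrary.'' Your auxiliary set $K_0\supset K$ is unnecessary, since Lemma~\ref{le: compHold gives Lip in Lp} can be applied to $K$ directly.
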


\begin{proof}
Let $B_0=B(x_0,R)$. Since $X$ is complete and doubling, it is proper. Hence, closed and bounded sets are compact. Set
$$
K=\overline{B(x_0,(2\lambda+2)R)},
$$ which guarantees that
$$
\dist(B_0,X\setminus K)\geq(2\lambda+1)R>2\lambda R\geq\lambda\diam B_0,
$$
since $\diam B_0\le 2R$.

By Lemmas \ref{le: partition of unity} and \ref{le: compHold gives Lip in Lp}, there are $t_0>0$ and $C_K<\infty$ such that
\begin{equation}\label{eqThm: Lip in Lp}
    \int_K \bigl(\operatorname{Lip}\Phi_t f\bigr)^p\,d\mu
\leq C_K,
\end{equation} for all $0<t<t_0$.
Fix a sequence $t_j\searrow 0$ with
$$
t_j<\min\{ t_0, 1/3, R, \dist(B_0,X\setminus K)/3 \},
$$
and set
$$
u_j:=\Phi_{t_j}f.
$$

Since $f$ is continuous on the compact set $K$, it is uniformly continuous on $K$. We claim that
$$
u_j\to f
\qquad\text{uniformly in }B_0.
$$
Indeed, if $x\in B_0$ and $\phi_i(x)\neq 0$, then $x_i\in B(x,2t_j)$, and hence
$$
B(x_i,t_j)\subset B(x,3t_j)\subset K.
$$
Therefore, by choice of $\Phi_t f$ and $\sum_i \phi_i=1$, we have
$$
\begin{aligned}
\|u_j(x)-f(x)\|
&=
\left\|
\sum_i \phi_i(x)\bigl(f_{B(x_i,t_j)}-f(x)\bigr)
\right\|
\\
&\le
\sum_i\phi_i(x)
\fint_{B(x_i,t_j)}
\|f(y)-f(x)\|\,d\mu(y)
\\
&\le
\sup_{y\in B(x,3t_j)} \|f(y)-f(x)\|.
\end{aligned}
$$
The last quantity tends to $0$ uniformly for $x\in B_0$, by uniform continuity of $f$ on $K$. Hence $u_j\to f$ uniformly on $B_0$.

Since $B_0$ has finite measure, uniform convergence implies $L^p$-convergence, i.e.,
$$
\|u_j-f\|_{L^p(B_0)}
\le
\mu(B_0)^{1/p}\sup_{x\in B_0}\|u_j(x)-f(x)\|
\longrightarrow 0.
$$
By Lemma \ref{lem: Haj grad for Lip - OLD}, the function $h_j:=C\,M_s(\chi_K\operatorname{Lip}u_j)$ is a Haj\l asz  gradient of $u_j$ on $B_0$. Since $p>s$, the Maximal Function Theorem and \eqref{eqThm: Lip in Lp} give
$$
\|h_j\|_{L^p(B_0)}
\le
\|h_j\|_{L^p(X)}
\lesssim
\|\operatorname{Lip}u_j\|_{L^p(K)}
\le C_K^{1/p}.
$$
Thus the approximating functions $u_j$ converge to $f$ in $L^p(B_0 \colon V)$, and they admit Haj\l asz  gradients uniformly bounded in $L^p(B_0\colon V)$. By Lemma \ref{le: limit of Haj is Haj} this implies that $f\in M^{1,p}(B_0\colon V)$.
Since $B_0$ was arbitrary, we have that $f\in M^{1,p}_{\text{loc}}(X\colon V)$, completing the proof.
\end{proof}

The proof of the main characterization result now follows.

\begin{proof}[Proof of Theorem~\ref{thmA:almost-characterization}]
    The inclusion $CH_m^{p,\alpha_p}(X \colon V)\subset CH^{p,\alpha_p}(X \colon V)$ follows directly by Definition~\ref{def: CH maps}, and the inclusion $CH^{p,\alpha_p}(X\colon V)\subset M^{1,p}_\loc(X \colon V)$ follows by Theorem~\ref{thm: CH are M1ploc}. 
    The fact that $M^{1,p}_{\loc}(X \colon V)=N_\loc^{1,p}(X \colon V)$ can be found in \cite[Theorem~10.5.3]{Juha-Jeremy-etc-book}, and the inclusion $N_\loc^{1,p}(X \colon V)\subset \bigcap_{Q<q<p} CH^{q,\alpha_q}_m(X \colon V)$ can be found in \cite{AC-G26}.
    
    We focus on the remaining inclusion $M^{1,p}_\loc(X \colon V)\subset CH^{p,\alpha_p}(X \colon V)$. Given arbitrary $\eps\in (0,1)$ and $E\subset X$ compact, fix a tiny neighborhood of $E$, say 
    $$
    \Omega=\bigcup_{z\in E}B(z,1/10).
    $$ Since $X$ is proper, we have that $K:=\overline{\Omega}$ is compact. Fix $r_M>0$ be the upper threshold for the radii in
Lemma~\ref{DOUBembedding-cpt}, applied to the compact set $K$.
Choose $0<r_E<r_M$ small enough so that $B(z,10r_E)\subset K$ for every $z\in E$. 
When $X$ is bounded, also require that
$(2+\eps)r_E<\diam X.$
Suppose $\{B_i:=B(x_i,r)\}_{i\in I}$ is as in Definition~\ref{def: CH maps} with $r<r_E$. Then by Lemma \ref{DOUBembedding-cpt} there is some $g\in D(f_K)\cap L^p(K)$ so that
    \begin{equation}\label{eq: MS holder norm bound}
        |f|_{\alpha_p, B_i}^p\lesssim \int_{2B_i} g(z)^pd\mu(z).
    \end{equation} For $z\in \cup_{i\in I}2B_i$ let
    $$
    I(z):=\{i\in I: z\in 2B_i\}.
    $$ Then by Ahlfors regularity
    $$
    \# I(z) C_\mu^{-1} (\eps r)^Q\leq \sum_{i\in I(z)}\mu (B(x_i, \eps r))=\mu\left(\bigcup_{i\in I(z)} B(x_i, \eps r)\right).
    $$ But for any $i\in I(z)$ we have $\eps B_i\subset B(z,2r+\eps r)$. So the above and Ahlfors regularity give
    $$
    \# I(z)\leq\frac{C_\mu^2 (2r+\eps r)^Q}{\eps^Qr^Q}\leq \frac{C_\mu^2 (2+\eps )^Q}{\eps^Q}.
    $$ Summing over all $i\in I$ in \eqref{eq: MS holder norm bound} and using the above we have
    $$
    \sum_{i\in I}|f|_{\alpha_p, B_i}^p\lesssim \int_K g(z)^p \sum_{i\in I} \chi_{2B_i}(z)d\mu(z)\leq \int_K g(z)^p d\mu(z) \,\frac{C_\mu^2 (2+\eps )^Q}{\eps^Q},
    $$ which proves the compactly H\"older condition for the arbitrary $f\in M^{1,p}_\loc(X \colon V)$.
\end{proof}

\begin{remark}
    Note that in \cite{Chron_comp_holder_Minkowski} it is proved that $N^{1,p}_\loc(X \colon V)\subset CH^{q,\alpha_q}(X \colon V)$ for all $q\in (Q,p)$, with a similar integrability drop to that of the modified compactly H\"older case. However, the reason that the end point case $q=p$ is not established in \cite{Chron_comp_holder_Minkowski} is that the underlying space $X$ is assumed to be locally $Q$-homogeneous, which is a generally weaker property than $Q$-Ahlfors regularity. Nonetheless, this was still enough to establish the desired dimension distortion results in that paper. Similarly, while the dimension distortion results in \cite{AC-G26} were established for the modified compactly H\"older class under the assumption of local $Q$-homogeneity for $X$, these results are true for the generally bigger $CH^{p,\alpha_p}(X \colon V)$ class if $X$ admits the stronger conditions in Theorem~\ref{thmA:almost-characterization}.
\end{remark}

\section{Compactly H\"older and Haj\l asz seminorms}

Throughout this section we assume $(X,d,\mu)$ is a complete $Q$-Ahlfors regular metric measure space supporting a $p$-Poincar\'e inequality  with $p>Q\geq1$. 

Let $K\subset X$ be compact, let $p>Q$, and set
$$
    \alpha_p:=1-\frac Qp.
$$
For $\eps\in(0,1)$ and $0<r<1$, let
$\mathcal B_{\eps,K}(r)$ be the collection of all at most countable
families $\mathcal B=\{B(x_i,\rho)\}_{i\in I}$  of balls $B_i=B(x_i,\rho)$, such that
$$
    K\subset \bigcup_{i\in I}B_i,\qquad
    B_i\cap K\neq\emptyset,\qquad
    0<\rho<r,
$$
and
$$
    B(x_i,\eps \rho)\cap B(x_j,\eps \rho)=\emptyset
    \qquad\text{for } i\neq j.
$$
Define
$$
    [f]_{p,\alpha_p,\eps}(K)^p
    :=
    \inf_{0<r<1}
    \sup_{\mathcal B\in\mathcal B_{\eps,K}(r)}
    \sum_{B_i\in\mathcal B}|f|_{\alpha_p,B_i}^p.
$$
Equivalently, $[f]_{p,\alpha_p,\eps}(K)^p$ is the infimum of such constants
$C$ such that there exists $r_K\in(0,1)$ with the property that
$$
    \sum_{B_i\in\mathcal B}|f|_{\alpha_p,B_i}^p\le C
$$
for every $\mathcal B\in\mathcal B_{\eps,K}(r_K)$.

We are now ready to prove the comparability of the corresponding seminorms.

\begin{proof}[Proof of Theorem~\ref{thmB:seminorm-comparison}]
Let $f:X\to V$ be continuous. We first prove the estimate
$$
    [f]_{p,\alpha_p,\eps}(K)
    \lesssim_{\eps,p,K,\Omega}
    \|f\|_{\dot{M}^{1,p}(\Omega)}.
$$
If the right-hand side is infinite, there is nothing to prove. Otherwise, let $g\in L^p(\Omega)$ be a Haj{\l}asz gradient of $f$ on $\Omega$. Set
$$
K_1:=\{x\in X:\dist(x,K)\le 1\}.
$$
Since $X$ is proper and $K$ is compact, $K_1$ is compact.
Let $r_{M}\in(0,1/3)$ be any upper threshold for the radii satisfying the Morrey estimate in Lemma~\ref{DOUBembedding-cpt}  for the compact set $K_1$. Since $K\subset\Omega$ and $K$ is compact, we can choose $r_0\in(0,r_{M})$
sufficiently small that
\begin{equation}\label{eq: seminorm Lambda r_0}
    3r_0
    <
    \dist(K,X\setminus\Omega).
\end{equation}
If $X$ is bounded then we also require that
$(2+\eps)r_0<\diam X.$

Given $\rho\in(0,r_0)$, let
$$
    \mathcal B=\{B(x_i,\rho)\}_{i\in I}
    \in \mathcal B_{\eps,K}(r_0),
$$ and set $B_i=B(x_i,\rho)$ for all $i$.
Due to $B_i\cap K\neq\emptyset$ and \eqref{eq: seminorm Lambda r_0} we have $x_i\in K_1$ and $2B_i\subset\Omega$ for every $i$. Thus, applying Lemma~\ref{DOUBembedding-cpt} (the Morrey estimate) yields
\begin{equation}\label{eq: ch norm upper big ball}
    |f|_{\alpha_p,B_i}^p
    \lesssim
    \int_{2 B_i} g^p\,d\mu.
\end{equation}
For each $z\in \cup_{i\in I}2B_i$, let
$$
I(z):=\{i\in I: z\in 2 B_i\}.
$$ 
Then $z\in K_1$ and by Ahlfors regularity
$$
    \# I(z) C_\mu^{-1} (\eps \rho)^Q\leq \sum_{i\in I(z)}\mu (B(x_i, \eps \rho))=\mu\left(\bigcup_{i\in I(z)} B(x_i, \eps \rho)\right).
    $$ But for any $i\in I(z)$ we have $\eps B_i\subset B(z,(2+\eps)\rho)$. So the above and Ahlfors regularity gives
    $$
    \# I(z)\leq\frac{C_\mu^2 (2+\eps)^Q\rho^Q}{\eps^Q\rho^Q}\leq \frac{C_\mu^2 (2+\eps )^Q}{\eps^Q}.
    $$ 
    Summing over all $i\in I$ in \eqref{eq: ch norm upper big ball} and using the above we have
    $$
    \sum_{i\in I}|f|_{\alpha_p, B_i}^p\lesssim \int_\Omega g^p \sum_{i\in I} \chi_{2 B_i}d\mu\leq \int_\Omega g^p d\mu \,\frac{C_\mu^2 (2 +\eps )^Q}{\eps^Q}.
    $$
Taking the supremum over all
$\mathcal B\in\mathcal B_{\eps,K}(r_0)$ and then using the
definition of $[f]_{p,\alpha_p,\eps}(K)$ gives
$$
[f]_{p,\alpha_p,\eps}(K)^p
\lesssim
\|g\|_{L^p(\Omega)}^p.
$$
Finally, taking the infimum over all Haj{\l}asz gradients $g$ of
$f$ on $\Omega$ yields
$$
[f]_{p,\alpha_p,\eps}(K)
\lesssim
\|f\|_{\dot M^{1,p}(\Omega)}.
$$

We now prove the left-hand-side estimate
$$
    \|f\|_{\dot{M}^{1,p}(U)}
    \lesssim
    [f]_{p,\alpha_p,\eps}(K).
$$
If $[f]_{p,\alpha_p,\eps}(K)^p=\infty$, the relation is trivial, and so assume that $[f]_{p,\alpha_p,\eps}(K)^p<\infty$. 
Let
$\delta>0$. By the definition of the semi-norm, there exists $r_K=r_K(\delta)\in(0,1)$ such
that
\begin{equation}\label{eq: semi-norm pf CH-best-constant}
    \sum_{B_i\in\mathcal B}|f|_{\alpha_p,B_i}^p
    \le [f]_{p,\alpha_p,\eps}(K)^p+\delta
\end{equation}
for every $\mathcal B\in\mathcal B_{\eps,K}(r_K)$.

Let $A\ge1$ be the constant from Lemma~\ref{le: partition of unity}. Since $0<\eps<1/8$, we have
\begin{equation}\label{eq: sigma-choice}
    4\eps A\le A.
\end{equation}
Choose $t>0$  so small that
\begin{equation}\label{eq: seminorm Asigma r_K}
    2At<\min\{r_K,\diam X\}.
\end{equation}
By Lemma~\ref{le: partition of unity}, there exists a locally Lipschitz mapping $\Phi_tf:X\rightarrow V$ satisfying
$$
    \operatorname{Lip}(\Phi_t f)(x)
    \lesssim
    \frac1t
    \fint_{B(x,At)}
    \|f-f_{B(x,At)}\|\,d\mu,
    \qquad x\in K.
$$
Arguing as in the proof of Lemma \ref{le: compHold gives Lip in Lp} and using $\alpha_p=1-Q/p$ yields
\begin{equation}\label{eq: Lip-Phit}
    \bigl(\operatorname{Lip}(\Phi_t f)(x)\bigr)^p
    \lesssim
    t^{-Q}|f|_{\alpha_p,B(x,At)}^p,
    \qquad x\in K.
\end{equation}

Let $\{z_i\}_{i=1}^N\subset K$, be a maximal $A t$-separated set. Then $N\in\mathbb{N}$ and
$$
    K\subset \bigcup_{i=1}^N B(z_i,A t).
$$
Choose a measurable partition $\{E_i\}_{i=1}^N$ of $K$ such that
$$
    E_i\subset B(z_i,A t).
$$
If $x\in E_i$, then $B(x,At)\subset B(z_i,2At)$.
Thus, by \eqref{eq: Lip-Phit} and Ahlfors regularity,
\begin{equation}\label{eq: seminorm Lip Lq norm}
\begin{aligned}
    \int_K(\operatorname{Lip}\Phi_t f)^p\,d\mu
    &\lesssim
    \sum_{i=1}^N
    \int_{E_i}
    t^{-Q}|f|_{\alpha_p,B(z_i,2At)}^p\,d\mu
    \\
    &\lesssim
    \sum_{i=1}^N
    |f|_{\alpha_p,B(z_i,2At)}^p .
\end{aligned}
\end{equation}
Set $B_i:=B(z_i,2At)$.
Then $\{B_i\}_{i=1}^N$ covers $K$, every $B_i$ meets $K$, and by \eqref{eq: seminorm Asigma r_K} we have $\operatorname{rad}(B_i)<r_K$.
Moreover, by \eqref{eq: sigma-choice} we have
$$
    2\eps\operatorname{rad}(B_i)
    =
    4\eps At
    \le A t.
$$
Since the centers $z_i$ are $A t$-separated, the balls $B(z_i,2\eps At)$ are pairwise disjoint. Hence, $\{B_i\}_{i=1}^N\in\mathcal B_{\eps,K}(r_K)$. As a result, by \eqref{eq: semi-norm pf CH-best-constant} and \eqref{eq: seminorm Lip Lq norm} we have
\begin{equation}\label{eq: seminorm Lip-Phit-bound}
    \int_K(\operatorname{Lip}\Phi_t f)^p\,d\mu
    \lesssim
    [f]_{p,\alpha_p,\eps}(K)^p+\delta .
\end{equation}

Choose a sequence $t_j\to0$ satisfying 
$$2At_j<\min\{r_K,\diam X\}$$ 
and $3t_j<\dist(U,X\setminus K)$, and set
$$
    u_j:=\Phi_{t_j}f.
$$
Then, if $x\in U$ and $\phi_i(x)\neq 0$, it follows from definition of $\Phi_t$ that $x_i\in B(x,2t_j)$, and hence
$$
B(x_i,t_j)\subset B(x,3t_j)\subset K.
$$
Consequently,
\[
    \|u_j(x)-f(x)\|
    \le
    \sup_{y\in B(x,3t_j)}\|f(y)-f(x)\|.
\]
Since $f$ is uniformly continuous on the compact set $K$,
we conclude that $u_j\to f$ uniformly on $U$, and hence in
$L^p(U\colon V)$.
Applying Lemma~\ref{lem: Haj grad for Lip} yields a constant $C>0$, that is independent of $j$ and has the property that
$$
    h_j:=C\,M_s(\chi_K\operatorname{Lip}u_j)
$$
is a Haj\l asz  gradient of $u_j$ on $U$. 
As in the proof of
Theorem~\ref{thm: CH are M1ploc}, since $p>s$, the
Hardy--Littlewood maximal theorem and
\eqref{eq: seminorm Lip-Phit-bound} give
$$
\|h_j\|_{L^p(U)}
\lesssim
\|\operatorname{Lip}u_j\|_{L^p(K)}
\lesssim
\left(
[f]_{p,\alpha_p,\eps}(K)^p+\delta
\right)^{1/p}.
$$
Thus, since $u_j\to f$ in $L^p(U\colon V)$, 
Lemma~\ref{le: limit of Haj is Haj} implies that
$f\in M^{1,p}(U\colon V)$.
Moreover, the proof of that lemma and the weak
lower semicontinuity of the $L^p$ norm give
\[
\begin{aligned}
\|f\|_{\dot M^{1,p}(U)}
&\le
\liminf_{j\to\infty}\|h_j\|_{L^p(U)}\\
&\lesssim
\bigl(
[f]_{p,\alpha_p,\eps}(K)^p+\delta
\bigr)^{1/p}.
\end{aligned}
\]
Since the implicit constant is independent of $\delta$, letting
$\delta\to0$ gives
\[
\|f\|_{\dot M^{1,p}(U)}
\lesssim
[f]_{p,\alpha_p,\eps}(K).
\]
This completes the proof.
\end{proof}

\begin{corollary}\label{cor: ball-CH-Haj-comparison}
Let $B:=B(x,r)$ be a ball and $0<\eps<1/8$. For all $\theta\in(1,\infty)$ for which $\theta^{-1}B$ is connected, there exists a positive constant $C$ such that
$$
    C^{-1}\|f\|_{\dot{M}^{1,p}(\theta^{-1}B)}
    \leq
    [f]_{p,\alpha_p,\eps}(\overline B)
    \leq C
    \|f\|_{\dot{M}^{1,p}(\theta B)},
$$
for every continuous $f:X\to V$.
\end{corollary}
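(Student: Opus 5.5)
The plan is to obtain the corollary as a direct specialization of Theorem~\ref{thmB:seminorm-comparison}, with $K:=\overline B$, $\Omega:=\theta B=B(x,\theta r)$ and $U:=\theta^{-1}B=B(x,r/\theta)$. Everything then reduces to checking the hypotheses of that theorem for this triple.

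\emph{Compactness of $K$.} Since $X$ is complete and doubling, it is proper, as was already used in the proof of Theorem~\ref{thm: CH are M1ploc}. Hence the closed bounded set $\overline B$ is compact.

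\emph{$\Omega$ is an open neighbourhood of $K$.} Clearly $\Omega=B(x,\theta r)$ is open. Every point $y\in\overline B$ satisfies $d(x,y)\le r<\theta r$, because the closure of $B(x,r)$ lies in the closed ball $\{d(x,\cdot)\le r\}$. So $K\subset\Omega$.

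\emph{Hypotheses on $U$.} The set $U=B(x,r/\theta)$ is nonempty, open and hence measurable, and it is connected by assumption. It remains to show $\dist(U,X\setminus K)>0$, and this is the one step needing care, since $\overline B$ may be strictly smaller than the closed ball $\{d(x,\cdot)\le r\}$. I avoid that issue by only using $B\subset\overline B$. Indeed, if $z\in U$ and $w\notin K$, then $w\notin B$, so $d(x,w)\ge r$. Therefore $d(z,w)\ge r-r/\theta=r(1-\theta^{-1})>0$, and so $\dist(U,X\setminus K)\ge r(1-\theta^{-1})$. This bound uses $\theta>1$ in an essential way.

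With all hypotheses verified and $0<\eps<1/8$, Theorem~\ref{thmB:seminorm-comparison} supplies a constant $C$, depending on $\eps,p,\theta$, the ball $B$ and the data of $X$ but not on $f$, such that
\[
C^{-1}\|f\|_{\dot M^{1,p}(\theta^{-1}B)}\le [f]_{p,\alpha_p,\eps}(\overline B)\le C\|f\|_{\dot M^{1,p}(\theta B)}
\]
for every continuous $f\colon X\to V$. I do not expect a genuine obstacle here; the distance estimate above is the only point that is more than bookkeeping.
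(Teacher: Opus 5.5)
Your proposal is correct and follows the paper's route. The paper gives no separate argument and presents the corollary as the direct specialization of Theorem~\ref{thmB:seminorm-comparison} with $K=\overline B$, $\Omega=\theta B$ and $U=\theta^{-1}B$, and your verification of the hypotheses is exactly what it leaves implicit. In particular, your bound $\dist(\theta^{-1}B,X\setminus\overline B)\ge r(1-\theta^{-1})>0$, which uses only $B\subset\overline B$, is the one check that needs care.
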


Arguing as in the proof of Theorem~\ref{thmB:seminorm-comparison}, with Lemma~\ref{lem: Haj grad for Lip - OLD} in place of Lemma~\ref{lem: Haj grad for Lip} gives a version of Theorem~\ref{thmB:seminorm-comparison}, where the set $U$ is not necessarily connected; we omit the details.
\begin{theorem}\label{thm: CH-Haj seminorms - OLD}
Let $p>Q$, $0<\eps<1/8$, and $\alpha_p=1-\frac Qp$. Let
$K\subset X$ be compact, and let $\Omega\subset X$ be open with $K\subset \Omega$. Assume that $U\subset X$ is a nonempty measurable set with
$$
\lambda\diam U<\dist(U,X\setminus K), 
$$
where $\lambda$ is the Poincar\'e radial constant.
Then there exists a positive constant $C$ such that every continuous $f:X\to V$ satisfies
$$
    C^{-1}\|f\|_{\dot{M}^{1,p}(U)}
    \leq
    [f]_{p,\alpha_p,\eps}(K)
    \leq
    C\|f\|_{\dot{M}^{1,p}(\Omega)}.
$$
\end{theorem}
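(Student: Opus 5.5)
The plan is to repeat the proof of Theorem~\ref{thmB:seminorm-comparison} verbatim, changing only the lemma used to produce Haj\l asz gradients on $U$. There are two inequalities to establish.

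\emph{Right-hand inequality.} Here $U$ plays no role. We fix a Haj\l asz gradient $g\in L^p(\Omega)$ and work with $K_1=\{\dist(\cdot,K)\le1\}$. We pick $r_0$ below the Morrey threshold of Lemma~\ref{DOUBembedding-cpt} for $K_1$, with $3r_0<\dist(K,X\setminus\Omega)$. For an admissible family $\{B(x_i,\rho)\}\in\mathcal B_{\eps,K}(r_0)$, Lemma~\ref{DOUBembedding-cpt} gives
\[
|f|^p_{\alpha_p,B_i}\lesssim\int_{2B_i}g^p\,d\mu .
\]
The $\eps$-disjointness and Ahlfors regularity bound the overlap of the balls $2B_i$ by $C_\mu^2(2+\eps)^Q\eps^{-Q}$. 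Summing, taking the supremum over families and then the infimum over $g$, we obtain
\[
[f]_{p,\alpha_p,\eps}(K)\lesssim\|f\|_{\dot M^{1,p}(\Omega)}.
\]

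\emph{Left-hand inequality.} Assume $[f]_{p,\alpha_p,\eps}(K)<\infty$, fix $\delta>0$, and choose $r_K(\delta)$ as in \eqref{eq: semi-norm pf CH-best-constant}. For $t$ with $2At<\min\{r_K,\diam X\}$, we form $\Phi_tf$ from Lemma~\ref{le: partition of unity}. A maximal $At$-separated net in $K$ produces the family $\{B(z_i,2At)\}$, which lies in $\mathcal B_{\eps,K}(r_K)$ because $4\eps A\le A$ when $\eps<1/8$. Exactly as in \eqref{eq: seminorm Lip-Phit-bound}, this yields
\[
\int_K(\operatorname{Lip}\Phi_tf)^p\,d\mu\lesssim[f]_{p,\alpha_p,\eps}(K)^p+\delta .
\]
Next take $t_j\to0$, also with $3t_j<\dist(U,X\setminus K)$, which is positive by hypothesis. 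Then $u_j=\Phi_{t_j}f\to f$ uniformly on $U$ by uniform continuity of $f$ on $K$, hence in $L^p(U\colon V)$, since $U\subset K$ has finite measure.

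\emph{The changed step.} In place of Lemma~\ref{lem: Haj grad for Lip}, apply Lemma~\ref{lem: Haj grad for Lip - OLD}. Its hypothesis $\lambda\diam U<\dist(U,X\setminus K)$ is exactly our assumption, and $u_j$ is locally Lipschitz, hence Lipschitz on a compact neighborhood of $K$. It gives
\[
h_j=C\,M_s(\chi_K\operatorname{Lip}u_j)
\]
as a Haj\l asz gradient of $u_j$ on $U$, with $C=C(\mu,PI,U,K)$ independent of $j$. Since $p>s$, the maximal theorem \eqref{eq: maxim Mq ineq} gives
\[
\|h_j\|_{L^p(U)}\lesssim\big([f]_{p,\alpha_p,\eps}(K)^p+\delta\big)^{1/p}.
\]
Lemma~\ref{le: limit of Haj is Haj} then produces a Haj\l asz gradient $h$ of $f$ on $U$ as a weak limit of the $h_j$. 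Weak lower semicontinuity of the norm gives $\|h\|_{L^p(U)}\le\liminf_j\|h_j\|_{L^p(U)}$, and letting $\delta\to0$ finishes the proof.

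\emph{Main obstacle.} The only point needing care is uniformity of the constant in Lemma~\ref{lem: Haj grad for Lip - OLD}. Its $\varepsilon$, and hence its $C$, depends only on the gap between $\lambda\diam U$ and $\dist(U,X\setminus K)$, not on $u_j$ or $\delta$. Checking this ensures the final constant $C$ is independent of $f$.
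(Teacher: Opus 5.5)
Your proposal is correct and is exactly the argument the paper indicates (it omits the details): rerun the proof of Theorem~\ref{thmB:seminorm-comparison}, with Lemma~\ref{lem: Haj grad for Lip - OLD} replacing Lemma~\ref{lem: Haj grad for Lip} to get Haj\l asz gradients of $\Phi_{t_j}f$ on $U$, with a constant independent of $j$ and $\delta$. Your added checks fill in the omitted details: that $U\subset K$ has finite measure, that $\dist(U,X\setminus K)>0$ allows $3t_j<\dist(U,X\setminus K)$, and that $\Phi_{t_j}f$ is Lipschitz on a compact neighborhood of $K$ because $X$ is proper.
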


\section{Example of Sobolev not in modified compactly H\"older}
\begin{theorem}\label{thm: W1p not CH}
    For any $n\in \N$ and $p>n$, there is a continuous  $u:\R^n\to \R$
such that $u\in W^{1,p}(\R^n)\subset CH^{p, 1-n/p}(\R^n,\R)$,
but $u\notin CH_m^{p, 1-n/p}(\R^n,\R)$.
\end{theorem}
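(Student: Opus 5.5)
The plan is to build $u$ as a sum of disjointly supported smooth bumps at very fast decreasing scales. Each bump then has $W^{1,p}$-energy equal, up to constants, to the $p$-th power of its own $\alpha_p$-H\"older coefficient, where $\alpha_p=1-n/p$. The modified definition allows balls of different radii whose $\eps$-cores are disjoint. So a single tiny bump can sit inside a whole chain of balls at dyadically increasing radii, and it is counted roughly $\log(1/s_k)$ times. We choose the scales so that the energies are summable but the energies weighted by $\log(1/s_k)$ are not.

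\textbf{Construction.} Fix $\psi\in C_c^\infty(B(0,1))$ with $\psi(0)=1$ and $\psi(e_1/2)=0$. Set $c_k:=2^{-k}e_1$ and $s_k:=\exp(-k^2)$, so $s_k\ll 2^{-k}$. Let $a_k:=k^{-2/p}s_k^{\alpha_p}$ and define
\[
u(x):=\sum_{k\ge 1} a_k\,\psi\bigl((x-c_k)/s_k\bigr).
\]
The supports are pairwise disjoint. We compute
\[
\|\nabla(a_k\psi((\cdot-c_k)/s_k))\|_{L^p}^p\approx a_k^p s_k^{n-p}=k^{-2},
\]
which is summable, and the $L^p$ norms are smaller still. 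Since $a_k\to0$, the series converges uniformly, so $u$ is continuous with compact support and $u\in W^{1,p}(\R^n)$. The inclusion $W^{1,p}\subset CH^{p,\alpha_p}$ is Theorem~\ref{thmA:almost-characterization}.

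\textbf{Chains of balls.} Take $E=\overline{B(0,1)}$, $\eps\in(0,1)$ and any $r_E>0$. For each large $k$, consider the points $x_{k,j}:=c_k+\tfrac12 2^{-j}e_2$ with radii $r_{k,j}:=2^{-j}$, for $j$ ranging over the integers with $s_k\le 2^{-j}\le \min\{r_E,2^{-k-4}\}$. There are $\gtrsim k^2-k$ such $j$. Each ball $B(x_{k,j},r_{k,j})$ contains $B(c_k,s_k/4)$ once $2^{-j}\ge s_k$ (up to adjusting constants), and hence contains $c_k$ and $c_k+s_ke_1/2$. Therefore
\[
|u|_{\alpha_p,B(x_{k,j},r_{k,j})}\ge a_k (s_k/2)^{-\alpha_p}\gtrsim k^{-2/p}.
\]
The cores $B(x_{k,j},\eps 2^{-j})$ are disjoint for distinct $j$ when $\eps<1/4$; for larger $\eps$ use a spacing factor $M^{-j}$ with $M=M(\eps)$ instead of $2^{-j}$, which only changes the log count by a constant. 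Cores for different $k$ are disjoint because all radii in the chain for $k$ are below $2^{-k-4}$ while $|c_k-c_{k'}|\ge 2^{-k-1}$. Summing over $K\le k\le N$ gives
\[
\sum_{k,j}|u|^p_{\alpha_p,B_{k,j}}\gtrsim \sum_{k=K}^N k^{-2}\,k^2\to\infty \quad\text{as } N\to\infty.
\]

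\textbf{Completing to a cover (the main obstacle).} The chosen balls must be extended to an admissible cover of $E$ without destroying core-disjointness. Since only finitely many balls are used for fixed $N$, let $\tau:=\min\eps r_{k,j}>0$. The set $F:=E\setminus\bigcup B(x_{k,j},r_{k,j})$ is compact. Take a maximal $\sigma$-separated subset $\{y_l\}$ of $F$ with $\sigma$ much smaller than $(1-\eps)\tau$, and use the balls $B(y_l,2\sigma)$. Their cores $B(y_l,2\eps\sigma)$ are mutually disjoint when $\eps<1/4$; in general take radius $\sigma$ and $\sigma/\eps$-separation with a slightly larger radius. Each $y_l$ lies outside $B(x_{k,j},r_{k,j})$, so it is at distance at least $(1-\eps)r_{k,j}$ from the old cores, and the new cores avoid them. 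All radii are below $r_E$, and the extra terms are nonnegative. Hence no constant $C_E$ can work, and $u\notin CH_m^{p,\alpha_p}(\R^n,\R)$. For $n=1$ replace the $e_2$ offset by placing $x_{k,j}=c_k-\tfrac12 2^{-j}$, on the side away from the other bumps.
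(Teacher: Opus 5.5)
Your proposal is correct in substance. Its bump construction is the same as the paper's: disjoint bumps of amplitude $k^{-2/p}s_k^{\alpha_p}$ at super-exponentially small scales, each with gradient energy $\approx k^{-2}$ and $\alpha_p$-H\"older quotient $\gtrsim k^{-2/p}$. You build the admissible family differently, though.

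The paper takes $E=\{0\}$ and the balls $B(M^{k-j^2}e_1,2M^{k-j^2})$, $1\le k\le j$. All of these contain the origin, so they are automatically an admissible cover of $E$. Core-disjointness reduces to the exponents $k-j^2$ being distinct integers. Each bump is counted $j$ times, which gives a harmonic series.

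You instead stack about $\log_2(1/s_k)\approx k^2$ offset dyadic balls over each bump. This shows the mechanism more transparently: each bump is counted roughly $\log(1/s_k)$ times, the sum grows linearly, and nothing depends on where the bumps sit. The price is that your balls share no common point, so you need $E=\overline{B(0,1)}$ and a completion step. That step is sound. Each $y_l\in F$ lies outside every chosen ball, so $|y_l-x_{k,j}|\ge r_{k,j}\ge \eps r_{k,j}+2\eps\sigma$ once $\sigma$ is small, and the added terms are nonnegative. The paper's choice gives the slightly stronger fact that failure already occurs at a one-point compact set, with no completion needed.

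A few details need fixing.
\begin{enumerate}
\item The supports need not be disjoint for $k=1,2$, since $s_1=e^{-1}>\tfrac14=|c_1-c_2|$. Start the sum at $k=2$.
\item Within one chain, consecutive centres are $2^{-j}/4$ apart, while the cores have radii summing to $\tfrac32\eps 2^{-j}$. So you need $\eps\le 1/6$, not $\eps<1/4$.
\item Negating Definition~\ref{def: CH maps} requires only one pair $(E,\eps)$. Simply fix, say, $\eps=1/100$ and drop the remarks about larger $\eps$. As written they fail for $\eps\ge 1/2$: the core of $B(x_{k,j},M^{-j})$ then contains $x_{k,j+1}$, because $\tfrac12(M^{-j}-M^{-j-1})<\eps M^{-j}$.
\item The radii must be strictly less than $r_E$.
\item For $n\ge 2$, check directly that $c_k+\tfrac{s_k}{2}e_1\in B(x_{k,j},2^{-j})$, using $|c_k+\tfrac{s_k}{2}e_1-x_{k,j}|=\tfrac12(s_k^2+4^{-j})^{1/2}<2^{-j}$. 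This point does not lie in $B(c_k,s_k/4)$, so your stated justification does not cover it.
\end{enumerate}
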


\begin{proof}

Let $n\in \N$, $p>n$, and $\alpha:=1-n/p$.
Fix a large $M\ge 100$. Let $e_1:=(1,0,\dots,0)$, and set
$$
    q_j:=M^{-j^2} e_1,
    \qquad
    s_j:=(M^{-j^2})^2=M^{-2j^2}.
$$
Choose $\phi\in C_c^\infty(B(0,1/100))$ so that $\phi(0)=0$ and $\phi(x_0)=1$, for some point $x_0\in B(0,1/100)$.

Define
$$
    u_j(x):=j^{-2/p} s_j^\alpha
    \phi\left(\frac{x-q_j}{s_j}\right),
$$
and set
$$
    u(x):=\sum_{j=1}^\infty u_j(x).
$$

Note that
$$
\operatorname{supp}(u_j)\subset B(q_j,s_j/100),
$$
and since $s_j/100+s_{j+1}/100<M^{-j^2}-M^{-(j+1)^2}$, the balls $B(q_j,s_j/100)$
are pairwise disjoint, after increasing $M$ if necessary.
Hence,  at every point of
$\R^n$, at most one term of $u$ is nonzero. 

We now prove that $u\in W^{1,p}(\R^n)$. Since the supports are pairwise
disjoint,
$$
\|u\|_{L^p(\R^n)}^p=\sum_{j=1}^\infty \|u_j\|_{L^p(\R^n)}^p
$$
and, since $\nabla u$ exists in $\R^n\setminus\{0\}$, we have
$$
\|\nabla u\|_{L^p(\R^n)}^p=\sum_{j=1}^\infty \|\nabla u_j\|_{L^p(\R^n)}^p.
$$ 
By a change of variables $y=(x-q_j)/s_j$ and noting that $\alpha p+n=p$ we have
$$\|u_j\|_{L^p(\R^n)}^p=j^{-2} s_j^{\alpha p+n}
    \|\phi\|_{L^p(\R^n)}^p=j^{-2} s_j^p\|\phi\|_{L^p(\R^n)}^p,
$$ for all $j\in \N$. Since  $\sum_{j=1}^\infty j^{-2} s_j^p=\sum_{j=1}^\infty j^{-2} (M^{-2j^2})^p<\infty$, it follows that $\|u\|_{L^p(\R^n)}<\infty$

Similarly,
$$\nabla u_j(x)=j^{-2/p} s_j^{\alpha-1}\nabla\phi\left(\frac{x-q_j}{s_j}\right),
$$
and 
$$\|\nabla u_j\|_{L^p(\R^n)}^p=j^{-2} s_j^{p(\alpha-1)+n} \|\nabla\phi\|_{L^p(\R^n)}^p.
$$
But due to $p(\alpha-1)+n=0$, the above implies
$$\|\nabla u_j\|_{L^p(\R^n)}^p=j^{-2}\|\nabla\phi\|_{L^p(\R^n)}^p,
$$
which is enough to show that
$$\|\nabla u\|_{L^p(\R^n)}^p=\sum_{j=1}^\infty \|\nabla u_j\|_{L^p(\R^n)}^p<\infty.
$$
Thus, $u\in W^{1,p}(\R^n)$.

Moreover, $u$ is continuous on $\R^n$. Indeed, let $x\in \R^n$ and $x_k\in \R^n$ for $k\in \N$ with $x_k\rightarrow x$. If $x\in B(q_j,s_j/100)$ for some $j\in \N$, then $u$ is continuous at $x$ because $u_j$ is smooth. If $x\notin \overline{\cup_j B(q_j,s_j/100)}$, then $x_k$ must eventually lie outside the support of $u$, implying that $u(x_k)=0=u(x)$ for all $k$ large enough. Thus, the only case left is $x\in \overline{\cup_j B(q_j,s_j/100)}\setminus (\cup_j B(q_j,s_j/100))$. If $x\neq 0$, then it would lie on the boundary of some $B(q_j,s_j/100)$, which due to $\operatorname{supp}(u_j)$ being compactly contained in $B(q_j,s_j/100)$ yields similarly that $u(x_k)=0=u(x)$ for all $k$ large enough. If $x=0$, then 
$$
|u(x_k)|\leq j_k^{-2/p}s_{j_k}^\alpha \|\phi\|_\infty\rightarrow 0,
$$ as $k\rightarrow \infty$ because $u(x_k)\neq 0$ implies that $x_k\in B(q_{j_k},s_{j_k}/100)$ for progressively smaller balls.

We now show that $u\notin CH_m^{p, \alpha}(\R^n,\R)$.
Set $E=\{0\}$ compact, and fix $\varepsilon=1/100$.
For integers $1\le k\le j$, define
$$
q_{j,k}:=M^{k-j^2}e_1,\qquad\rho_{j,k}:=2M^{k-j^2},
$$ and let $B_{j,k}:=B(q_{j,k},\rho_{j,k})$.
Each $B_{j,k}$ contains $0$, because $|q_{j,k}|=M^{k-j^2}<2M^{k-j^2}=\rho_{j,k}$, and,
thus $\{B_{j,k}\}$ covers the compact set $E=\{0\}$.

Note that every $B_{j,k}$  contains both $q_j$ and $q_j+s_jx_0$. Indeed,
$$
|q_j-q_{j,k}|=|M^{-j^2}-M^{k-j^2}|=M^{k-j^2}-M^{-j^2}<M^{k-j^2}<2M^{k-j^2}=\rho_{j,k},
$$
and due to $|x_0|M^{-2j^2}\leq M^k M^{-j^2}$ we also have
$$|q_j+s_jx_0-q_{j,k}|\le|q_j-q_{j,k}|+s_j|x_0|<M^{k-j^2}+s_j|x_0|\leq 2 M^{k-j^2}=\rho_{j,k}.
$$
Since $q_j, q_j+s_jx_0\in B_{j,k}$, using these two points in the definition of the $\alpha$-H\"older coefficient gives
\begin{equation}\label{eq: low bd holder1}
    |u|_{\alpha,B_{j,k}}\ge\frac{|u(q_j+s_jx_0)-u(q_j)|}{|s_jx_0|^\alpha}.
\end{equation}
Because the supports of the $u_j$'s are pairwise disjoint,
$$u(q_j)=u_j(q_j)=j^{-2/p} s_j^\alpha\phi(0)=0,
$$
and
$$u(q_j+s_jx_0)=u_j(q_j+s_jx_0)=j^{-2/p} s_j^\alpha\phi(x_0)=j^{-2/p} s_j^\alpha.
$$
Therefore, using the above on \eqref{eq: low bd holder1} and raising to the $p$-th yields
\begin{equation}\label{eq: low bd holder2}
    |u|_{\alpha,B_{j,k}}^p\ge\left(\frac{j^{-2/p} s_j^\alpha}{s_j^\alpha|x_0|^\alpha}\right)^p=|x_0|^{-\alpha p}j^{-2}.
\end{equation}

It remains to verify that the shrunken balls are disjoint. Let $j,k,l,m$ be integers such that $1\leq k\leq j$, $1\leq l\leq m$, and $(j,k)\neq(m,l)$. In order to show that
$$
B(q_{j,k}, \varepsilon \rho_{j,k})\cap B(q_{m,l}, \varepsilon \rho_{m,l})=\emptyset,
$$ it is enough to show that
\begin{equation}\label{eq: shr balls disjoint equiv ineq}
    |M^{l-m^2}-M^{k-j^2}|>2\varepsilon (M^{l-m^2}+M^{k-j^2}).
\end{equation}
Without loss of generality assume that $M^{l-m^2}>M^{k-j^2}$, and the proof is similar in the other case. This implies that $l-m^2>k-j^2$, and since $j,k,l,m\in \N$, we have that $l-m^2-(k-j^2)\geq 1$. This and the choices $\varepsilon=1/100$ and $M\geq 100$ yield
$$
M^{l-m^2}\geq M \,M^{k-j^2}>  \frac{51}{49} M^{k-j^2},
$$which implies that \eqref{eq: shr balls disjoint equiv ineq} is indeed true. Thus, $B(q_{j,k}, \varepsilon \rho_{j,k})\cap B(q_{m,l}, \varepsilon \rho_{m,l})=\emptyset$ as needed.

Assume toward a contradiction that $u\in CH_m^{p, \alpha}(\R^n,\R)$. Apply the definition to the compact set
$E=\{0\}$ and the choice $\varepsilon=1/100$. Then there exist constants $ r_E, C_E>0,$
such that every  $\varepsilon$-separated cover of $E$ by balls
of radius less than $r_E$ satisfies
$$
\sum_i |u|_{\alpha,B_i}^p\le C_E.
$$

But
$$\rho_{j,k}=2M^{k-j^2}\le2M^{j-j^2}\to 0
    \qquad\text{as }\,\,j\to\infty.
$$
Thus, we can choose $J$ large enough for $\rho_{j,k}<r_E$ for all $j\ge J,\ 1\le k\le j$.
For $N>J$, consider the finite collection
$$\mathcal B_{J,N}:=\{B_{j,k}: J\le j\le N,\ 1\le k\le j\}.
$$
This is an admissible cover of
$E=\{0\}$, and all of its balls have radius less than $r_E$. Therefore the modified compactly
H\"older condition implies
$$
\sum_{j=J}^N\sum_{k=1}^j |u|_{\alpha,B_{j,k}}^p\le C_E.
$$
However, by \eqref{eq: low bd holder2} we have
$$
C_E\geq \sum_{j=J}^N\sum_{k=1}^j |u|_{\alpha,B_{j,k}}^p\ge |x_0|^{-\alpha p}\sum_{j=J}^N\sum_{k=1}^j \frac1{j^2}=|x_0|^{-\alpha p}
    \sum_{j=J}^N \frac1j.
$$
The right-hand side tends to $\infty$ as $N\to\infty$, contradicting the
existence of $C_E$. Therefore, $u\notin CH_m^{p, \alpha}(\R^n,\R)$.
\end{proof}

\bibliographystyle{acm}
\bibliography{SobolevDim}
\end{document}